\documentclass[a4paper,12pt,oneside]{amsart} 
\usepackage[ansinew]{inputenc}
\usepackage[british]{babel} 
\usepackage{amsmath}
\usepackage{amssymb}
\usepackage{amsfonts}
\usepackage{latexsym}
\usepackage{amsthm}
\usepackage{enumerate}
\theoremstyle{plain}
\newtheorem{theorem}{Theorem}[section]
\newtheorem{lemma}[theorem]{Lemma}
\newtheorem{corollary}[theorem]{Corollary}
\theoremstyle{definition}
\newtheorem{definition}[theorem]{Definition}
\newtheorem{example}[theorem]{Example}
\theoremstyle{remark}
\newtheorem*{remark}{Remark}
\newcommand{\R}{\mathbb{R}}
\newcommand{\N}{\mathbb{N}}
\newcommand{\Rn}{\mathbb{R}^{n}}

\newcommand{\BV}{\mathrm{BV}}
\newcommand{\Lp}{L^{p}}
\newcommand{\Np}{N^{1,p}}
\newcommand{\No}{N^{1,1}}
\newcommand{\liploc}{\mathrm{Lip}_{\mathrm{loc}}}
\newcommand{\Nlocp}{N^{1,p}_{\mathrm{loc}}}
\newcommand{\Nloco}{N^{1,1}_{\mathrm{loc}}}
\newcommand{\ep}{\varepsilon}
\newcommand{\lloc}{L^{1}_{\mathrm{loc}}}

\newcommand{\modp}{\mathrm{Mod}_{p}}
\newcommand{\modo}{\mathrm{Mod}_{1}}
\newcommand{\capp}{\mathrm{cap}_{p}}
\newcommand{\capo}{\mathrm{cap}_{1}}
\newcommand{\loclipcapp}{\mathrm{locLip}-\mathrm{cap}_{p}}
\newcommand{\loclipcapo}{\mathrm{locLip}-\mathrm{cap}_{1}}
\newcommand{\newtcapp}{N^{1,p}_{\mathrm{loc}}-\mathrm{cap}_{p}}
\newcommand{\newtcapo}{N^{1,1}_{\mathrm{loc}}-\mathrm{cap}_{1}}
\newcommand{\capbv}{\mathrm{cap}_{\mathrm{BV}}}
\newcommand{\cbv}{\widetilde{\mathrm{cap}}_{\mathrm{BV}}}
\newcommand{\cbvo}{{\mathrm{cap}}_{\mathrm{BV,O}}}
\newcommand{\capone}{{\mathrm{cap}}_{1,\mathrm{O}}}

\newcommand{\cbvtr}{\widetilde{\mathrm{cap}}_{\mathrm{BV-trunc}}}

\newcommand{\haus}{\mathcal{H}}
\newcommand{\intave}{\dashint}

\DeclareMathOperator*{\aplim}{\text{ap lim}}
\def\Xint#1{\mathchoice
{\XXint\displaystyle\textstyle{#1}}%
{\XXint\textstyle\scriptstyle{#1}}%
{\XXint\scriptstyle\scriptscriptstyle{#1}}%
{\XXint\scriptscriptstyle\scriptscriptstyle{#1}}%
\!\int}
\def\XXint#1#2#3{{\setbox0=\hbox{$#1{#2#3}{\int}$}
\vcenter{\hbox{$#2#3$}}\kern-.5\wd0}}

\def\dashint{\Xint-}
\begin{document}
\title[On BV-capacities and Sobolev capacity]{Comparisons of relative BV-capacities and Sobolev capacity in metric spaces}
\author[Hakkarainen \and Shanmugalingam]{Heikki Hakkarainen \and Nageswari Shanmugalingam}
\subjclass[2010]{28A12, 26A45, 30L99}

\begin{abstract}
We study relations between the variational Sobolev $1$-capacity and versions of variational BV-capacity 
in a complete metric space equipped with a doubling measure and supporting a weak $(1,1)$-Poincar\'e inequality.
We prove the equality of $1$-modulus and $1$-capacity, 
extending the known results for $1<p<\infty$ to also cover the more geometric case $p=1$. 
Then we give alternative definitions for variational BV-capacities and obtain equivalence results between them. 
Finally we study relations between total $1$-capacity and versions of BV-capacity.
\end{abstract}

\maketitle
\section{Introduction}
In this article we study connections between different $1$-capacities and BV-capacities in 
the setting of metric measure spaces. We obtain characterizations for the $1$-capacity of a condenser, 
extending  results known for $p>1$ to also 
cover the more geometric case $p=1$. Furthermore, we study how the different versions of BV-capacity, corresponding to 
various pointwise requirements that the capacity test functions need to fulfill, relate to each other. 

One difficulty that occurs when working with minimization problems on the space $W^{1,1}(\R^n)$ is the lack of reflexivity. 
Indeed, the methods used to develop the theory of $p$-capacity are closely related to those used in certain variational minimization problems; in such problems reflexivity
or the weak compactness property of the function space $W^{1,p}(\R^n)$ 
when $p>1$ usually plays an important role. 
One possible way to deal with this lack of reflexivity is to consider the space $\BV(\R^n)$, that is, the
class of functions of bounded variation. 
This wider class of functions provides tools, such as lower
semicontinuity of the total variation measure, that can be used to overcome the problems 
caused by the lack of reflexivity in the arguments. 

This approach was originally used to study variational $1$-capacity in the Euclidean case in~\cite{Pi73}, \cite{Hut},
and~\cite{FedZ72}. The article 
\cite{KinKST08} showed that similar approach could also be used to study variational $1$-capacity in the setting of metric measure spaces. In \cite{KinKST08} the main tool in obtaining a connection between the 
$1$-capacity and BV-capacity was the metric space version of Gustin's boxing inequality, see \cite{Gus60}. Since then, this 
strategy has been used in \cite{HakKin} to study a version of BV-capacity and Sobolev $1$-capacity in the setting of 
metric measure spaces. Since the case $p=1$ corresponds to geometric objects in the metric measure space such as sets of finite
perimeter and minimal surfaces, it behooves us to understand this case. A study of potential theory in the case $p=1$
for the setting of metric measure spaces
was begun in the papers~\cite{HakKin}, \cite{KinKST08}, and~\cite{KinKST10}, and we continue the study in this note.

In the theory of calculus of variations, partial differential equations and potential theory many interesting features of Sobolev functions are measured in terms of capacity.
Just as sets of measure zero are associated with 
the $L^p$-theory, sets of $p$-capacity zero should be understood in order to deal with the Sobolev space $W^{1,p}(\Rn)$.
For $1<p<\infty$ the $p$-capacity of sets is quite well-understood; see for example \cite{EvaG92}, \cite{Maz}, 
\cite{Zie89} for the Euclidean setting, 
and in the more general metric measure space setting, \cite{HeiK98}, \cite{KalS01}, \cite{GolT02}, \cite{KinMa1}, \cite{KinMa2},
\cite{KinMa3}, \cite{Cos09}, and the references therein. 
The situation corresponding to the case $p=1$
is not so well-understood. 

In the setting of $p=1$ there are two natural function spaces to consider: Sobolev type
spaces, and spaces of functions of bounded variation. Since these two spaces are interrelated, it is natural
to expect that the corresponding capacities are related as well. However, these two function spaces are fundamentally
different in nature. Functions in the Sobolev class $W^{1,1}(\R^n)$ have quasicontinuous representatives, 
whereas functions of bounded
variation need not have quasicontinuous representatives, and in fact sometimes exhibit jumps across sets of nonzero
codimension one Hausdorff measure;  see for example~\cite{AmbMP04}. 
Because of the quasicontinuity of Sobolev type functions, one can either insist on the test functions
to have value one in a neighborhood of the set whose capacity is being computed, or merely require the test functions to 
have value $1$ on the set.
Such flexibility is not available in computing the BV-capacities and hence different point-wise requirements on the test 
functions might lead to different types of BV-capacity.
Thus, corresponding to the BV-class there is more than one possible notion of capacity and
it is nontrivial to even know which versions of BV-capacities are equivalent.  We point out here that the 
analog of Sobolev spaces considered in this paper, called the Newton-Sobolev spaces, consist automatically only
of quasicontinuous functions when the measure on $X$ is doubling and the space supports a $(1,1)$-Poincar\'e inequality.


The primary goal of this note is to compare these different notions of capacity related to the BV-class, and
to the capacity related to the 
Newton-Sobolev space $N^{1,1}(X)$. In Section~2 we describe the objects considered in this paper, and then in
Section~3 we show that the $1$-modulus of the family of curves in a domain $\Omega$ in the metric space, connecting 
two nonempty pairwise disjoint 
compact sets $E,F\subset\Omega$, is the same as the Newton--Sobolev $1$-capacity and local Lipschitz capacity of the condenser $(E,F,\Omega)$. Our arguments are based on constructing
appropriate test functions. In Section~4 we consider three alternative notions of variational BV-capacity of a compact set
$K\subset \Omega$, and show that these notions are comparable to each other and to the variational
Sobolev $1$-capacity of $K$ relative to $\Omega$. We combine tools such as discrete convolution and boxing 
inequality to obtain the desired results. 
In Section~5 we consider total capacities where we minimize the norm of the test functions rather than their energy 
seminorm. We consider these quantities
of more general bounded sets - related to both BV-functions and to Sobolev functions - and compare them to their variational capacities. 

In the classical
Euclidean setting some of these results are known, for example from \cite{Hut}, but even in the weighted Euclidean setting and the Carnot groups the results of this paper are new.

{\bf Acknowldegement:} N.S. was partially supported by the Taft Foundation of the University of Cincinnati. H.H.~was 
supported by The Finnish National Graduate School in Mathematics and its Applications. 
Part of the research was conducted during H.H.'s visit to the University of Cincinnati and N.S.'s visit to Aalto University; the
authors thank these institutions for their kind hospitality.
%
%
%
%
%
%
\section{Preliminaries}
In this article $X=\left(X,d,\mu \right)$ is a complete metric measure space. We assume that $\mu$ is a Borel regular outer measure that is \emph{doubling}, i.e. there is a constant $C_D$, called the \emph{doubling constant} of $\mu$, such that
\[
\mu(2B)\leq C_D\mu(B)
\]
for all balls $B$ of $X$. Furthermore, it is assumed that $0<\mu(B)<\infty$ for all balls $B\subset X$. 
These assumptions imply that the metric space $X$ is \emph{proper}, that is, 
closed and bounded sets are compact. In this article, by a \emph{path} we mean a rectifiable nonconstant continuous
mapping from a compact interval to $X$. 
The standard tool that is used to measure path families is the following.
\begin{definition}
Let $\Omega\subset X$ be an open connected set. The \emph{$p$-modulus} of
a collection of paths $\Gamma$ in $\Omega$ for $1\leq p<\infty$ is defined as
\[\modp(\Gamma) = \inf\limits_{\varrho} \int_{\Omega} \varrho^{p} d\mu \] 
where the infimum is taken over all nonnegative Borel-measurable functions $\varrho$ such that 
$\int_{\gamma} \varrho\, ds \geq 1$ for each $\gamma\in\Gamma$.
%
\end{definition}
We use the definition of Sobolev spaces on metric measure space $X$ based on the notion of 
$p$-weak upper gradients, see~\cite{HeiK98}, \cite{Sha00}. Note that instead of the whole space $X$, we can consider its open subsets in the definitions below.  
\begin{definition}
A nonnegative Borel function $g$ on $X$ is an \emph{upper gradient} of an extended real valued
function $u$ on $X$ if for all paths $\gamma$, 
\begin{equation}\label{uppergrad}
|u(x)-u(y)|\leq\int_{\gamma}g\,ds,
\end{equation}
whenever both $u(x)$ and $u(y)$ are finite, and $\int_{\gamma}g\,ds=\infty$ otherwise. Here
$x$ and $y$ denote end points of $\gamma$.
Let $1\le p<\infty$. If $g$ is a nonnegative measurable function on $X$, and if the integral in 
\eqref{uppergrad} is well defined and the inequality holds for $p$-almost every path, then $g$ is a 
\emph{$p$-weak upper gradient} of $u$.
\end{definition}
The phrase that inequality \eqref{uppergrad} holds for \emph{p-almost every path} with $1\leq p <\infty$ means that it fails
only for a path family with zero $p$-modulus. 
Many usual rules of calculus are valid for upper gradients as well. For a good overall reference interested readers may
see~\cite{Hei01},  \cite{Haj03}, and~\cite{BjoB10}. 
%

If $u$ has a $p$-weak upper gradient $g\in L^p_{\text{loc}}(X)$, then
there is a \emph{minimal $p$-weak upper gradient} $g_u$ such that $g_u\le g$ $\mu$-almost everywhere 
for every $p$-weak upper gradient $g$ of $u$, see \cite{Haj03} and \cite{BjoB10}. 

%
When $\Omega$ is an open subset of Euclidean space equipped with the Euclidean metric and Lebesgue measure, the Sobolev type
space considered below coincides with the space of quasicontinuous representatives of classical Sobolev functions.
In the setting of weighted Euclidean spaces with $p$-admissible weights, or the Carnot-Carath\'eodory spaces, the
corresponding Sobolev spaces coincide in an analogous manner with the Sobolev type space considered below. 
We refer the interested reader to~\cite[Example~3.10]{Sha00}; recall that in the Carnot-Carath\'eodory setting, the classical Sobolev
spaces consist of functions in $L^p(X)$ whose \emph{horizontal} derivatives are also in $L^p(X)$, and so the 
upper gradient analog is with respect to paths with tangents in the horizontal directions. In the Carnot-Carath\'eodory
setting paths with finite lengths are precisely paths with tangents in the horizontal directions.

\begin{definition}
Let $1\leq p<\infty$. If $u\in\Lp(X)$, let
\[
\Vert u\Vert_{N^{1,p}(X)}=\left(\int_{X}|u|^{p}\,d\mu+ \inf\limits_{g}\int_{X} g^{\,p}\,d\mu\right)^{1/p},
\]
where the infimum is taken over all $p$-weak upper gradients of $u$. The \emph{Newtonian space} on $X$ is the
quotient space
\[
N^{1,p}(X)=\left\{u: \Vert u\Vert_{N^{1,p}(X)}<\infty\right\}\big /\sim,
\]
where $u\sim v$ if and only if $\Vert u-v\Vert_{N^{1,p}(X)}=0$.
\end{definition}
Note that if $X$ has no nonconstant rectifiable path, then zero is an upper gradient of every function, and so
in this case $N^{1,p}(X)=\Lp(X)$. Therefore, to have a reasonable theory of Sobolev spaces that gives rise to a viable
potential theory, one needs a further condition on the metric measure space. In literature, the following Poincar\'e type inequality is considered.
\begin{definition}
Let $1\le p<\infty$.
The space $X$ supports a \emph{weak (1,p)-Poincaré inequality} if there exists constants $C_{P}>0$ and 
$\tau\geq 1$ such that for all balls $B(x,r)$ of $X$, all locally integrable functions $u$ on $X$, and for all $p$-weak upper gradients $g$ of $u$,
\[\intave_{B(x,r)}|u-u_{B(x,r)}|\,d\mu\leq C_{P}r\left(\intave_{B(x,\tau r)}g^{p}\,d\mu\right)^{1/p},\]
where
\[u_{B(x,r)}=\intave_{B(x,r)}u\,d\mu=\frac{1}{\mu(B(x,r))}\int_{B(x,r)}u\,d\mu.\]
\end{definition}

It is known that $\text{Lip}(X)\cap N^{1,p}(X)$ is dense in $N^{1,p}(X)$ if $\mu$ is doubling and $(1,p)$-Poincaré inequality is satisfied, see \cite{Sha00}. From this it easily follows that Lipschitz functions with compact support are dense in $N^{1,p}(X)$, if $X$ is also complete. Furthermore, under a Poincar\'e inequality it is known that the metric space supports a myriad of rectifiable
curves; see for example~\cite{HeiK98} and~\cite{Kor07}.

The theory of functions of bounded variation and sets of finite perimeter in metric measure space setting will
be extensively used in this thesis. These concepts were introduced and developed in~\cite{Mir03}, \cite{Amb01}, \cite{Amb02}, \cite{Amb03},\cite{AmbM03} and \cite{AmbMP04}.

\begin{definition}
Let $\Omega\subset X$ be an open set. The \emph{total variation} of a function $u\in L^1_{\text{loc}}(\Omega)$ is defined as
\[
    \Vert Du\Vert(\Omega)=\inf_{\{u_i\}}\liminf_{i\to\infty}\int_{\Omega} g_{u_i}\, d\mu,
\]
where $g_{u_i}$ is an upper gradient of $u_i$ and the infimum is taken over all such sequences of functions 
$u_i\in \liploc(\Omega)$ such that $u_i\to u$ in 
$\lloc(\Omega)$.
We say that a function $u\in L^{1}(\Omega)$ is of \emph{bounded variation}, denoted $u\in\BV(\Omega)$, 
if $\Vert Du\Vert(\Omega)<\infty$. Furthermore, we say that 
$u$ belongs to $\BV_{\text{loc}}(\Omega)$ if $u\in\BV(\Omega')$ for every $\Omega'\Subset \Omega$. Note that 
$N^{1,1}(\Omega)\subset \BV(\Omega)$.

It was shown in~\cite{Mir03} that for $u\in\BV_{\text{loc}}(X)$ we have
$\Vert Du\Vert(\cdot)$ to be a Radon measure on $X$. If $U\subset X$ is an open set such that $u$ is constant on $U$, then
$\Vert Du\Vert(U)=0$. A Borel set $E\subset X$ is said to have \emph{finite perimeter}
if $\chi_E\in \BV(X)$; the perimeter $P(E,A)$ of $E$ in a Borel set $A\subset X$ is the number 
\[
  P(E,A)=\Vert D\chi_E\Vert(A).
\]
\end{definition}
\begin{remark}
Since $X$ is a complete and doubling $1$-Poincaré space, we can consider sequences from $N^{1,1}_{\text{loc}}(\Omega)$ instead of from $\liploc(\Omega)$ in the definition of $\Vert Du\Vert(\Omega)$. 
This is due to the fact that functions from $N^{1,1}_{\text{loc}}(\Omega)$ can be approximated by functions from $\liploc(\Omega)$.
For a discussion, see for instance~\cite[Theorem~5.9]{BjoBS08} and~\cite[Theorem~5.41]{BjoB10}.
\end{remark}

The next lemma is useful in the following sections of this paper.

\begin{lemma}\label{compact}
 Let $\Omega$ be an open subset of $X$ and $u\in \BV(\Omega)$ such that the support of $u$ is a compact subset of $\Omega$. 
 Then there is an open set $U\Subset\Omega$ with $\text{supt}(u)\Subset U$, and a sequence $u_i\in \mathrm{Lip}(\Omega)$
 with $\text{supt}(u_i)\subset U$, such that $u_i\to u$ in $L^1(\Omega)$ and 
 \[
    \Vert Du\Vert(\Omega)=\lim_{i\to\infty}\int_{\Omega} g_{u_i}\, d\mu
 \]
 for a choice of upper gradient $g_{u_i}$ of $u_i$.
\end{lemma}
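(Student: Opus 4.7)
The plan is to start from the defining sequence guaranteed by the definition of $\Vert Du\Vert(\Omega)$, and then cut it off by a Lipschitz bump function so that the resulting functions are globally Lipschitz on $\Omega$, supported in a set $U$ compactly contained in $\Omega$, and have an upper gradient essentially no larger than those of the original sequence.

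First I would use the properness of $X$ together with $K:=\text{supt}(u)\Subset\Omega$ to produce nested open sets $K\Subset V\Subset U'\Subset U\Subset\Omega$ with compact closures inside $\Omega$. Set $\delta:=\min\{d(K,X\setminus V),\,d(\overline V,X\setminus U'),\,d(\overline{U'},X\setminus U)\}>0$, and define a cutoff
\[
 \eta(x)=\min\bigl\{1,\, \tfrac{1}{\delta}\,d(x,X\setminus U')\bigr\}\cdot \chi_{U'}(x),
\]
or more concretely any $L$-Lipschitz $\eta:X\to[0,1]$ with $\eta\equiv 1$ on $\overline V$ and $\text{supt}(\eta)\subset\overline{U'}\subset U$; here $L=1/\delta$. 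Then $L\,\chi_{\overline{U'}\setminus V}$ is an upper gradient of $\eta$, and the transition set $A:=\overline{U'}\setminus V$ is disjoint from $K$.

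Next, by the definition of total variation (and passing to a subsequence), choose $v_i\in \liploc(\Omega)$ with $v_i\to u$ in $\lloc(\Omega)$ and $\lim_i\int_\Omega g_{v_i}\,d\mu=\Vert Du\Vert(\Omega)$. Define $u_i:=\eta v_i$. Since $\eta$ has compact support in $U'\Subset\Omega$ and $v_i$ is Lipschitz on the compact set $\overline{U'}\subset\Omega$, the standard gluing (using $\eta=0$ outside $\overline{U'}$) shows $u_i\in \text{Lip}(\Omega)$ with $\text{supt}(u_i)\subset\overline{U'}\subset U$. For $L^1$-convergence, note that on $K$ we have $\eta=1$ and off $K$ we have $u=0$, so
\[
 |u_i-u|=|\eta v_i-\eta u+(\eta-1)u|\le \eta|v_i-u|\le |v_i-u|\chi_{\overline{U'}},
\]
and the right-hand side integrates to $0$ as $i\to\infty$ since $\overline{U'}$ is compact in $\Omega$.

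It remains to track the upper gradient. The Leibniz rule for upper gradients gives that
\[
 g_{u_i}:=\eta\,g_{v_i}+L\,|v_i|\,\chi_{A}
\]
is an upper gradient of $u_i=\eta v_i$. Integrating,
\[
 \int_\Omega g_{u_i}\,d\mu\le \int_\Omega g_{v_i}\,d\mu+L\int_A |v_i|\,d\mu.
\]
Because $A\subset\Omega\setminus K$, where $u=0$, and $A\subset\overline{U'}$ is compact, $\int_A|v_i|\,d\mu=\int_A|v_i-u|\,d\mu\to 0$. Hence $\limsup_i\int_\Omega g_{u_i}\,d\mu\le \Vert Du\Vert(\Omega)$. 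The reverse inequality is automatic: $u_i\in\liploc(\Omega)$ with $u_i\to u$ in $\lloc(\Omega)$, so by the very definition of the total variation, $\Vert Du\Vert(\Omega)\le \liminf_i\int_\Omega g_{u_i}\,d\mu$. Combining the two bounds gives $\lim_i\int_\Omega g_{u_i}\,d\mu=\Vert Du\Vert(\Omega)$, completing the proof.

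The main subtlety — and the step I expect to be the real point of the argument — is handling the ``transition term'' $L|v_i|\chi_A$ produced by the Leibniz rule: one must ensure that the support of $g_\eta$ lies entirely in the region $\{u=0\}$, which forces the nested construction $K\Subset V\Subset U'\Subset U\Subset\Omega$ rather than a single cutoff, so that $v_i\to 0$ in $L^1(A)$ kills the extra term in the limit.
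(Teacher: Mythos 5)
Your proposal is correct and follows essentially the same route as the paper: take the defining sequence $v_i\in\liploc(\Omega)$, multiply by an $L$-Lipschitz cutoff $\eta$ equal to $1$ on $\text{supt}(u)$ and supported in a compactly-contained open set, and observe that the Leibniz term from $g_\eta$ is controlled by $\int |v_i-u|\,d\mu\to 0$ on the (compact, away from $\text{supt}(u)$) transition region, while the reverse inequality comes from lower semicontinuity/definition of the total variation. The only cosmetic difference is your use of a nested chain $K\Subset V\Subset U'\Subset U$ where the paper uses a single intermediate open set $U$ with $\text{supt}(u)\Subset U\Subset\Omega$.
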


\begin{proof}
Since $u\in \BV(\Omega)$ there is a sequence $v_i\in \liploc(\Omega)$ with $v_i\to u$ in $L^1_{\text{loc}}(\Omega)$ and
\[\Vert Du\Vert(\Omega)=\lim_{i\to\infty}\int_{\Omega} g_{v_i}\, d\mu\] 
for some choice of upper gradient $g_{v_i}$ of $v_i$.
Since $u$ has compact support in $\Omega$ we can find an open set $U\Subset\Omega$ with $\text{supt}(u)\Subset U$.
Let $\eta$ be an $L$-Lipschitz function on $\Omega$ such that $0\le \eta\le 1$, $\eta=1$ on $\text{supt}(u)$, and $\eta=0$
on $\Omega\setminus U$. We set $u_i=\eta\, v_i$, and now show that this choice satisfies the claim of the lemma.

To see this, note that
\begin{align*}
  \int_{\Omega}|u-u_i|\, d\mu&=\int_U|u-\eta\, v_i|\, d\mu\\ 
      &=\int_{\text{supt}(u)}|u-v_i|\, d\mu + \int_{U\setminus\text{supt}(u)}\eta|v_i|\, d\mu\\
      &\le \int_U|u-v_i|\, d\mu+\int_{U\setminus\text{supt}(u)}|u-v_i|\, d\mu\\
      &\le 2\int_U|u-v_i|\, d\mu\to 0\; \text{ as }i\to\infty.
\end{align*}
Furthermore, the function $g_{u_i}=\eta g_{v_i}+|v_i|\, g_\eta$ is an upper gradient of $u_i$. Because $g_\eta=0$
on $\text{supt}(u)\cup (\Omega\setminus U)$ and $g_\eta\le L$, we see that
\begin{align*}
  \int_{\Omega}g_{u_i}\, d\mu&\le \int_Ug_{v_i}\, d\mu+L\int_{U\setminus\text{supt}(u)}|v_i|\, d\mu\\
       &\le \int_{\Omega}g_{v_i}\, d\mu+L\, \int_{U\setminus\text{supt}(u)}|u-v_i|\, d\mu.
\end{align*}
Because 
\[\lim_{i\to\infty}\int_{U\setminus\text{supt}(u)}|u-v_i|\, d\mu=0,\] 
it follows that
\[
   \limsup_{i\to\infty}\int_{\Omega}g_{u_i}\, d\mu\le \Vert Du\Vert(\Omega).
\]
On the other hand, by the previous argument we know that $u_i\to u$ in $L^1(\Omega)$. Thus 
\[\Vert Du\Vert(\Omega)\le \liminf_{i\to\infty}\int_{\Omega}g_{u_i}\, d\mu,\] 
and this completes the proof.
\end{proof}


The following \emph{coarea formula} holds, see~\cite[Proposition 4.2]{Mir03}: whenever $u\in \BV(X)$ and $E\subset X$ is a Borel set,
\[ 
   \Vert Du\Vert(E)=\int_{\R} P(\{u>t\},E)\, dt.
\]
It was also shown in the thesis~\cite[Theorem 6.2.2]{Cam08} that if $u\in N^{1,1}(X)$ then 
\[
    \Vert u\Vert_{N^{1,1}(X)}=\int_X|u|\, d\mu+\Vert Du\Vert(X)
\]
and that there is a minimal weak upper gradient $g$ of $u$ such that whenever $E\subset X$ is measurable, we have
\[\Vert Du\Vert(E)=\int_E g\, d\mu.\]
The above was proved in \cite[Theorem 6.2.2]{Cam08} for the case that $E$ is an open set. Since $\Vert Du\Vert(\cdot)$ is a Radon measure, 
the above follows for all measurable sets.
In a metric measure space equipped with a doubling measure, the $(1,1)$-Poincar\'e inequality implies 
the following \emph{relative isoperimetric inequality}, see~\cite{Mir03} or~\cite{AmbMP04}: 
there are constants $C>0$ and $\lambda\ge 1$ such that
whenever $E\subset X$ is a Borel set and $B$ is a ball of radius $r$ in $X$,
\[
   \min\{\mu(E\cap B),\mu(B\setminus E)\}\le C\, r\, P(E,\lambda B).
\]
See also~\cite[Theorem 1.1]{BobH97} for the converse statement. Thus if $E$ has zero perimeter in $\lambda B$, then the above inequality states that up to sets of measure zero, either $B\subset E$ or $B\cap E$ is empty. 

%
The \emph{Hausdorff measure of codimension one} of $E\subset X$ is defined as
\[
   \mathcal{H}(E)=\lim_{\delta\to 0}\ \inf\bigg\lbrace \sum_{i\in I} \frac{\mu(B_i)}{r_i}\, :\, I\subset\N,
                                       r_i\le \delta\text{ for all }i\in I, 
                                       \, E\subset\bigcup_{i\in I}B_i\bigg\rbrace.
\]  
Here $B_i$ are balls in $X$ and $r_i=\text{rad}~B_i$. 

\emph{Throughout this note we will assume that $(X,d,\mu)$ is a complete metric space with a doubling measure supporting a
weak $(1,1)$-Poincar\'e inequality.}
%
%
%
%
%
%
\section{Modulus and Continuous capacity of condensers; the case $p=1$}
It was shown in~\cite{KalS01} that when $1<p<\infty$ the $p$-modulus of the family of all paths in a domain $\Omega\subset X$
that connect two disjoint continua $E,F\subset \Omega$ must equal the variational continuous $p$-capacity of the condenser
$(E,F,\Omega)$. The tools used in that paper include a strong form of Mazur's lemma, which is not available when $p=1$. 
On the other hand, the results from~\cite{JarJRRS07}, which were not available at the time~\cite{KalS01} was written, should
in principle help us to overcome this difficulty. 

In this section we give an extension of the 
result of~\cite{KalS01} to the case $p=1$ by using the results of~\cite{JarJRRS07}. Once we have that the $p$-modulus of
the family of curves connecting $E$ to $F$ in $\Omega$ equals the variational $p$-capacity of the condenser
$(E,F,\Omega)$, we use the fact that under the Poincar\'e inequality Lipschitz functions form a dense subclass of the
space $N^{1,1}(X)$ and that functions in $N^{1,1}(X)$ are quasicontinuous to prove that the capacity of the condenser
can be computed by restricting to functions in $N^{1,1}(\Omega)$ that are continuous on $\Omega$. An easy 
modification of this proof also indicates that in computing the variational Sobolev $1$-capacity of a set $E\subset \Omega$,
one can insist that the test functions take on the value one on $E$, or equivalently take on the value one in a 
neighborhood of $E$, see for instance \cite{KilKM00} and \cite{BjoBS08}.

Let $\Omega\subset X$ be an open connected set in $X$, and
$E$ and $F$ be disjoint nonempty compact subsets of $\Omega$. Denote by $\modp(E,F,\Omega)$ the 
$p$-modulus of the collection of all rectifiable paths $\gamma$ in $\Omega$ with
one endpoint in $E$ and other endpoint in $F$. The \emph{$p$-capacity} of  the \emph{condenser} $(E,F,\Omega)$ is 
defined by
\[ 
    \capp(E,F,\Omega) = \inf \int_{\Omega} g^{p} d\mu , 
\]
where the infimum is taken over all nonnegative Borel measurable functions $g$ that are upper gradients of 
some function $u$, $0\leq u\leq 1$ with the property that $u=1$ in $E$ and $u=0$ in $F$. 
Note that in this definition it is not even required of $u$ to be measurable.
If $u$ is in addition assumed to be locally Lipschitz, then the corresponding number obtained
is denoted $\loclipcapp(E,F,\Omega)$. The definitions immediately imply that for $1\le p<\infty$,
%
\[\modp(E,F,\Omega) \leq \capp(E,F,\Omega) \leq \loclipcapp(E,F,\Omega). \]
By proposition 2.17 in \cite{HeiK98}, for $1\le p<\infty$ we have 
\[ \modp(E,F,\Omega) = \capp(E,F,\Omega).  \]
Our goal in this section is to prove that
\[\capo(E,F,\Omega) = \loclipcapo(E,F,\Omega) \]
and therefore extend the result in \cite{KalS01} to also cover the case $p=1$. We follow the strategy used in \cite{KalS01}. 
A crucial step in \cite{KalS01} is to prove that
\[\capp(E,F,\Omega) = \newtcapp(E,F,\Omega) , \]
where in the latter case the test functions $u$ are required to be in $\Nlocp(\Omega)$.
We prove that the above equality also holds for $p=1$. First we recall the following theorem. 
%
%
\begin{theorem}[{\cite[Theorem~1.11]{JarJRRS07}}]\label{measthm}
Let $X$ be a complete metric space that 
supports a doubling Borel measure $\mu$ which is nontrivial and finite on balls. Assume 
that $X$ supports a weak $(1,p)$-Poincar\'e inequality
for some $1\leq p<\infty$. If an extended real valued function $u:X\to\left[-\infty,\infty\right]$ 
has a $p$-integrable upper gradient, then $u$ is measurable and locally $p$-integrable.
\end{theorem}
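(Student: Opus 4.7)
The plan is to exploit the upper gradient inequality along $p$-almost every curve, together with the abundance of rectifiable paths furnished by the weak $(1,p)$-Poincar\'e inequality, in order to transfer measurability and integrability from $g$ to $u$. My approach would proceed in three movements: first eliminate the set where $u$ is infinite, then construct a Borel-measurable surrogate that agrees with $u$ outside a small set, and finally derive local $p$-integrability from pointwise control.

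The first step is to show that $E_\infty := \{x \in X : |u(x)| = \infty\}$ has $\mu$-measure zero. By the upper gradient inequality, any rectifiable path joining a point of $E_\infty$ to a point where $u$ is finite must satisfy $\int_\gamma g\, ds = \infty$. Fuglede's lemma applied to $g \in \Lp(X)$ says that the family of paths along which $g$ has infinite integral has zero $p$-modulus. Under the weak $(1,p)$-Poincar\'e inequality combined with doubling, every ball is traversed by an abundance of rectifiable paths (curves form a family of positive $p$-modulus), which forces $E_\infty$ to have $p$-capacity zero and hence $\mu$-measure zero.

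Next I would construct a measurable representative. Fix a base point $x_{0}$ at which $u$ is finite, and define
\[
   \tilde u(x) := \inf \bigg\{ u(x_{0}) + \int_\gamma g\, ds \, : \, \gamma \text{ is a rectifiable path from } x_{0} \text{ to } x \bigg\}.
\]
Lower semicontinuity of the path integral, together with the quasiconvexity of $X$ (a consequence of the Poincar\'e inequality in this complete doubling setting), implies that $\tilde u$ is Borel measurable. The upper gradient inequality, evaluated along $p$-almost every curve from $x_{0}$ to $x$, then gives $u(x) \le \tilde u(x)$ outside a set of $p$-capacity zero; applying the same construction to $-u$ yields the reverse inequality up to a null set. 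Hence $u$ agrees with a Borel function off a $\mu$-null set, which yields $\mu$-measurability of $u$ with respect to the completion of the Borel $\sigma$-algebra.

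Finally, local $p$-integrability would follow by combining the pointwise bound $|u(x)| \le |u(x_{0})| + \int_{\gamma_{x}} g\, ds$ along appropriate paths $\gamma_{x}$ into a given ball $B$ with a Fubini-type integration against a suitable family of curves through $B$ of controlled length (again produced by the Poincar\'e inequality), to obtain an estimate of the shape $\int_{B} |u|^{p}\, d\mu \le C_{B} + C \int_{\lambda B} g^{p}\, d\mu < \infty$. The hardest part, and the place where all the hypotheses really get used, is the very first obstacle: because $u$ is not assumed measurable \emph{a priori}, one cannot integrate $u$, take averages of it, or even speak of $u$ being equal to something almost everywhere. The key device that breaks this circularity is the construction of $\tilde u$ as a manifestly Borel quantity assembled out of $g$ and curve integrals, which is then matched to $u$ pointwise via the upper gradient inequality along curves.
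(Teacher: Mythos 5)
The paper offers no proof of this theorem; it is imported wholesale as \cite[Theorem~1.11]{JarJRRS07}, so there is nothing in the paper to compare your attempt against, and it must be judged on its own. There is a genuine gap in your second step. Writing $d_g(x_0,x)=\inf_\gamma\int_\gamma g\,ds$ over rectifiable paths from $x_0$ to $x$, the upper bound you obtain is $\tilde u(x)=u(x_0)+d_g(x_0,x)$, and running the same construction on $-u$ gives the lower bound $u(x_0)-d_g(x_0,x)$. These two Borel functions differ by $2d_g(x_0,x)$, which is certainly not $\mu$-almost everywhere zero for a typical $g\in\Lp(X)$; the sandwich merely restates the upper gradient inequality and forces nothing. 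So you have not shown that $u$ agrees with a Borel function off a $\mu$-null set, and measurability does not follow. The natural repair -- take a countable $d$-dense set $\{y_j\}$ and set $\tilde u=\inf_j\bigl(u(y_j)+d_g(y_j,\cdot)\bigr)$ -- still only yields $u\le\tilde u$; equality would require $\{y_j\}$ to be dense in the pseudometric $d_g$, and $d$-density gives no control on $d_g$-density.

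There are two further soft spots. First, it is not automatic that $d_g(x_0,\cdot)$ is Borel for an arbitrary nonnegative Borel $g\in\Lp(X)$: it is an infimum over an uncountable, non-compact family of paths, and its measurability requires an argument (for example, monotone approximation of $g$ from below by lower semicontinuous functions plus an Arzel\`a--Ascoli compactness step for curves of bounded length), not just an appeal to lower semicontinuity of $\gamma\mapsto\int_\gamma g\,ds$. Second, your Step~3 integrates $u$ against curve pencils and over balls, which presupposes exactly the measurability in dispute, so the three steps cannot be run in the order proposed. The route in \cite{JarJRRS07} is to first establish the MEC$_{p}$ property -- a full-measure ``main equivalence class'' within which any two points are joined by a path of finite $g$-integral -- and then to extract measurability by a more delicate argument based on level sets and approximations of $g$; the single-basepoint McShane-type construction you use is not an adequate substitute for that machinery.
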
 

The following observation is an easy consequence of the previous theorem.
\begin{corollary}\label{measlem}
Let $\Omega\subset X$ be an open connected set. If $u:\Omega\to\left[0,1\right]$ has an 
upper gradient $g\in L^{p}(\Omega)$, then $u$ is measurable. 
\end{corollary}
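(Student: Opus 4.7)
The plan is to deduce the corollary from Theorem \ref{measthm} by applying it to a suitable extension of $u$ to all of $X$. Since measurability is a local condition and $\Omega$ can be written as a countable union of open sets $V$ with $V \Subset \Omega$, it suffices to prove $u|_V$ is measurable for each such $V$. For a fixed $V$, I would choose an intermediate open set $W$ with $V \Subset W \Subset \Omega$ and a Lipschitz cutoff $\phi:X\to[0,1]$ with $\phi\equiv 1$ on $V$ and $\phi\equiv 0$ on $X\setminus W$, constructed explicitly using the distance function $\phi(x)=\max\{0, 1-d(x,V)/\epsilon\}$ for some $\epsilon<d(V,X\setminus W)$; this is available because $X$ is proper.

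Next I would define $\tilde u : X \to [0,1]$ by $\tilde u = u\phi$ on $\Omega$ and $\tilde u = 0$ on $X\setminus \Omega$, so that $\tilde u = u$ on $V$ and $\tilde u\equiv 0$ outside $W$. A candidate $L^p(X)$ upper gradient for $\tilde u$ is $\tilde g = (\phi g + L u)\chi_W$, where $L=\mathrm{Lip}(\phi)$. Since $u\le 1$, $g\in L^p(W)$, and $\overline{W}$ is compact (hence of finite measure), we have $\tilde g \in L^p(X)$.

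The main technical step is verifying that $\tilde g$ is indeed an upper gradient of $\tilde u$ on $X$, including along paths that cross $\partial\Omega$. Given a path $\gamma$ in $X$ with endpoints $a,b$, I would decompose $\gamma$ at its entries into and exits from $W$. On any sub-arc lying in $W\subset\Omega$, the product rule for upper gradients yields the required bound for $u\phi$. At any entry/exit time $t$ we have $\gamma(t)\in \partial W\subset X\setminus W$, hence $\phi(\gamma(t))=0$ and therefore $\tilde u(\gamma(t))=0$; the triangle inequality then chains together the sub-arc bounds to give $|\tilde u(a)-\tilde u(b)|\le \int_\gamma \tilde g\, ds$, even when $\gamma$ leaves $\Omega$ between these crossings. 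The buffer region $W\setminus V$ inside $\Omega$ is essential here: it absorbs the transition of $\phi$ from $1$ to $0$ strictly inside $\Omega$, so that $\tilde u$ vanishes in a neighborhood of $\partial\Omega$ and extension by zero preserves the upper gradient inequality. Once $\tilde u$ is shown to have an $L^p(X)$ upper gradient, Theorem \ref{measthm} gives that $\tilde u$ is measurable on $X$, so $u=\tilde u|_V$ is measurable on $V$, and taking the union over the exhaustion yields measurability of $u$ on $\Omega$. The main obstacle is the path-crossing verification of the upper gradient property at $\partial W$ and $\partial\Omega$, which is precisely why the Lipschitz cutoff is chosen to be compactly supported strictly inside $\Omega$.
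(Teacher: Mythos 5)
Your proposal is correct and follows essentially the same strategy as the paper: localize with a Lipschitz cutoff $\phi$ supported compactly in $\Omega$, form the product $u\phi$, extend it by zero to $X$, observe via the product rule that $\phi g + g_\phi u$ (which is $L^p$ because $u$ is bounded and the support is bounded) is an upper gradient of the extension, and then invoke Theorem~\ref{measthm}. The only cosmetic difference is that the paper localizes on concentric balls $B(x_0,r) \subset B(x_0,2r) \Subset \Omega$ with an explicit $1/r$-Lipschitz cutoff, whereas you use a general exhaustion $V \Subset W \Subset \Omega$; the underlying mechanism and the key verification (that the zero extension preserves the upper gradient inequality because $\phi$ vanishes in a neighborhood of $\partial\Omega$) are identical.
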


\begin{proof}
Recall that we have as a standing assumption that $X$ supports a weak $(1,1)$-Poincar\'e inequality.

It suffices to prove that $u$ coincides with a measurable function locally. 
To do so, for $x_0\in\Omega$
let $r>0$ such that $B(x_0,4r)\subset \Omega$, and let $\eta$ be a $1/r$-Lipschitz function on $X$
such that $0\le \eta\le 1$, $\eta=1$ on $B(x_0,r)$, and $\eta=0$ on $X\setminus B(x_0,2r)$. Consider
the function $v=\eta u$; note that while $u$ is not defined on $X\setminus \Omega$, since $\eta=0$
on $X\setminus\Omega$ we can extend $v$ by zero to $X\setminus\Omega$. 

A direct computation shows that $\rho=\eta g+r^{-1}u\, \chi_{B(x_0,2r)\setminus B(x_0,r)}$ is 
an upper gradient of $v$ in $X$. Thus it follows from Theorem~\ref{measthm} that $v$ is 
measurable on $X$, and hence $v$ is measurable on the ball $B(x_0,r)\subset\Omega$.; the proof is
completed by noting that on $B(x_0,r)$ we have $u=v$.
\end{proof}

\begin{theorem}\label{onecapsobocap}
Let $E$ and $F$ be nonempty disjoint compact subsets of an open 
connected set $\Omega\subset X$. Then
\[\capo(E,F,\Omega) = \newtcapo(E,F,\Omega) . \]
\end{theorem}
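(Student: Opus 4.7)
The plan is to establish the non-trivial inequality $\newtcapo(E,F,\Omega) \le \capo(E,F,\Omega)$; the reverse is immediate from the definitions, since every admissible test function for $\newtcapo$ (a function in $N^{1,1}_{\mathrm{loc}}(\Omega)$ with the correct boundary values) is in particular admissible for $\capo$, and upper gradients used to measure its Newton--Sobolev capacity are the same ones used to measure its $\capo$ capacity.

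To prove $\newtcapo \le \capo$, I take an arbitrary admissible pair $(u,g)$ for $\capo(E,F,\Omega)$, i.e., a function $u\colon\Omega\to[0,1]$ with $u=1$ on $E$ and $u=0$ on $F$, together with an upper gradient $g$ of $u$. Without loss of generality $\int_\Omega g\,d\mu<\infty$, since otherwise this pair contributes nothing to the infimum defining $\capo$. The goal is to show that $u$ is automatically admissible for $\newtcapo(E,F,\Omega)$, so that $\newtcapo(E,F,\Omega)\le\int_\Omega g\,d\mu$; taking the infimum over all admissible $(u,g)$ will then yield the claim.

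The key step is the measurability of $u$. Since $u\colon\Omega\to[0,1]$ has the upper gradient $g\in L^1(\Omega)$, and since $X$ is complete, doubling, and supports a weak $(1,1)$-Poincar\'e inequality, Corollary \ref{measlem} applies directly to give that $u$ is measurable on $\Omega$. Once $u$ is measurable, its boundedness together with local finiteness of $\mu$ immediately gives $u\in L^1_{\mathrm{loc}}(\Omega)$; and because the given upper gradient $g$ is in particular a $1$-weak upper gradient, the restriction to any open $U\Subset\Omega$ yields $u|_U\in L^1(U)$ with a $1$-weak upper gradient $g|_U\in L^1(U)$. Hence $u\in N^{1,1}_{\mathrm{loc}}(\Omega)$, so $u$ is admissible for $\newtcapo(E,F,\Omega)$, and $\capo$ and $\newtcapo$ are computed by infima over the same (up to measurability) class of admissible functions.

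The main obstacle is exactly the measurability issue. In the case $1<p<\infty$ treated in \cite{KalS01}, one can use reflexivity of $L^p$ and Mazur-lemma-type arguments to select measurable near-minimizers, but this tool collapses when $p=1$. The substitute is the measurability theorem of \cite{JarJRRS07}, cited here as Theorem \ref{measthm} and specialized in Corollary \ref{measlem}, which is tailored precisely to bypass the need for reflexivity. Once measurability is in hand, the rest of the proof is a straightforward unwinding of definitions and does not require further work.
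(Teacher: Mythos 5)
Your proposal is correct and follows essentially the same route as the paper: the only nontrivial step is showing that an admissible test function $u$ for $\capo$ is automatically measurable, for which both you and the paper invoke Corollary~\ref{measlem} (derived from the measurability result of \cite{JarJRRS07}), after which boundedness of $u$ and integrability of $g$ immediately give $u\in\Nloco(\Omega)$.
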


\begin{proof}
It is clear that
\[\capo(E,F,\Omega) \leq \newtcapo(E,F,\Omega) . \]
%
Let $\ep>0$ and $u:\Omega\to\left[0,1\right]$ be a function such that $u=1$ in $E$ and 
$u=0$ in $F$ and with an upper gradient $g_{u}$ for which
\[\int_{\Omega} g_{u} d\mu < \capo(E,F,\Omega) + \ep .\]
By Corollary~\ref{measlem}, $u$ is a measurable function with an 
integrable upper gradient. Since $u$ is bounded it follows that $u\in\Nloco(\Omega)$. Hence
\[\newtcapo(E,F,\Omega) \leq \int_{\Omega} g_{u} d\mu < \capo(E,F,\Omega) + \ep.\]
The desired inequality follows by letting $\ep\to 0$. 
\end{proof}
As pointed out in the beginning of Section~2, a complete metric measure space $X$ with a doubling measure is \emph{proper}, i.e.~all 
closed and bounded subsets of $X$ are compact. The doubling condition of the measure 
and weak $(1,p)$-Poincar\'e inequality, with $1\le p<\infty$, imply also that
Lipschitz functions are dense in $\Np(X)$, see for example~\cite[Theorem 4.1]{Sha00}. For the following theorem we refer to~\cite[Theorem 1.1]{BjoBS08}.
\begin{theorem}\label{quasicont}
Let $X$ be proper, $\Omega\subset X$ open, $1\leq p<\infty$ and assume that continuous functions are dense 
in $\Np(X)$. Then every $u\in \Nlocp(\Omega)$ is quasicontinuous in $\Omega$; that is, for every $\ep>0$ there is
an open set $U_\ep$ with $\mathrm{Cap}_p(U_\ep)<\ep$ such that $u\vert_{\Omega\setminus U_\ep}$ is continuous on
$\Omega\setminus U_\ep$.
\end{theorem}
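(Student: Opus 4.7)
The plan is to follow the classical capacity--Borel--Cantelli argument. The first step is a reduction: since $X$ is proper, $\Omega$ can be exhausted by open sets $V_j\Subset\Omega$. Countable subadditivity of the Sobolev $p$-capacity lets one patch together local exceptional sets, so it suffices to produce, for each such $V\Subset\Omega$ and each $\varepsilon>0$, an open set $U\subset V$ with $\mathrm{Cap}_p(U)<\varepsilon$ on whose complement $u|_V$ is continuous. Multiplying $u$ by a Lipschitz cut-off $\eta$ with $\eta=1$ on $V$ and $\mathrm{supt}(\eta)\Subset\Omega$ produces a function $\eta u\in N^{1,p}(X)$ that coincides with $u$ on $V$, so from here on I may treat $u$ as a Newtonian function on all of $X$.

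By the density hypothesis, choose continuous $v_k\in N^{1,p}(X)$ with $v_k\to u$ in $N^{1,p}$-norm and the rapid bound $\|v_{k+1}-v_k\|_{N^{1,p}(X)}\le 4^{-k}$. Define the sets
\[
E_k=\bigl\{x\in V:\,|v_{k+1}(x)-v_k(x)|>2^{-k}\bigr\},
\]
which are open by continuity of the $v_k$. The truncation $w_k=\min(2^k|v_{k+1}-v_k|,1)$ lies in $N^{1,p}(X)$ and equals $1$ on $E_k$, so the standard weak-type bound yields
\[
\mathrm{Cap}_p(E_k)\le \|w_k\|_{N^{1,p}(X)}^{p}\le \bigl(2^k\|v_{k+1}-v_k\|_{N^{1,p}(X)}\bigr)^{p}\le 2^{-kp}.
\]

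Given $\varepsilon>0$, pick $N$ so that $\sum_{k\ge N}2^{-kp}<\varepsilon/2$ and set $U'=\bigcup_{k\ge N}E_k$. Then $U'$ is open, $\mathrm{Cap}_p(U')<\varepsilon/2$, and for every $x\in V\setminus U'$ the telescoping estimate $|v_{k+1}(x)-v_k(x)|\le 2^{-k}$ shows that $(v_k)$ converges uniformly on $V\setminus U'$ to a continuous function $\tilde u$.

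The final and genuinely delicate step is to identify $\tilde u$ with the chosen representative of $u$ modulo an open set of capacity less than $\varepsilon/2$, which is then absorbed into $U'$. The crucial input is the characterising property of the Newtonian quotient: $\|w\|_{N^{1,p}(X)}=0$ forces $w=0$ outside a set of $p$-capacity zero. Applied to a suitable subsequence of $v_k-u$, this shows that $v_k\to u$ quasi-everywhere on $X$, so $\tilde u=u$ on $V\setminus U'$ except possibly on a set of $p$-capacity zero. Covering that exceptional set by an open neighborhood of capacity less than $\varepsilon/2$ and adjoining it to $U'$ yields the open set $U_\varepsilon$ demanded by the theorem, and on its complement $u$ equals the continuous function $\tilde u$. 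The main obstacle is precisely this last identification, as it relies on the pointwise convention for Newtonian representatives and not merely on norm convergence.
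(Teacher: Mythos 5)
The paper does not prove this theorem at all; it is quoted verbatim from Bj\"orn--Bj\"orn--Shanmugalingam \cite[Theorem~1.1]{BjoBS08}, so there is no in-paper proof to compare against. Your outline reproduces the classical telescoping/Borel--Cantelli strategy that underlies that reference, and the exhaustion reduction, the cut-off, the choice of $E_k=\{|v_{k+1}-v_k|>2^{-k}\}$, and the capacity bound $\mathrm{Cap}_p(E_k)\le 2^{-kp}$ via the truncation $w_k$ are all correct and standard.

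The gap is exactly where you yourself flag the delicacy, and it is not merely a technicality. After getting $\tilde u=\lim v_k$ continuous on $V\setminus U'$ and $\|u-\tilde u\|_{N^{1,p}}=0$, you have $u=\tilde u$ outside a set $Z$ with $\mathrm{Cap}_p(Z)=0$, and you then \emph{cover $Z$ by an open set of capacity $<\varepsilon/2$}. That enclosure is the outer regularity of $\mathrm{Cap}_p$ at null sets, and with the definition in force here (test functions required to equal $1$ on $E$, not merely on a neighbourhood of $E$) it is not a free fact: for a general Newtonian test function $w$ with $w=1$ on $Z$ and $\|w\|_{N^{1,p}}$ small, the superlevel sets $\{w>t\}$ need not be open, and passing to a continuous approximant $v$ of $w$ does not guarantee $v>t$ on $Z$, because $|w-v|$ is only controlled outside another capacity-small set which is again not open. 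In other words, the statement ``capacity-zero sets can be enclosed in open sets of small capacity'' is essentially equivalent to the quasicontinuity you are trying to prove, so invoking it makes the argument circular unless it is established independently. That independent establishment is precisely the hard content of \cite{BjoBS08}, where properness and the density hypothesis are used in a genuinely nontrivial way (a Choquet-capacitability-type argument) to obtain outer regularity and quasicontinuity in tandem. To turn your sketch into a proof you would need either to supply that lemma yourself or to reorganize the argument so that the exceptional set on which $u\ne\tilde u$ is already absorbed into the open sets $E_k$ built from the continuous approximants, rather than first producing a capacity-null set and then trying to thicken it.
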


Here, by the total capacity $\text{Cap}_p(U_\ep)$ we mean the number
\[
   \text{Cap}_p(U_\ep)=\inf\Vert u\Vert_{N^{1,p}(X)}^p,
\]
where the infimum is taken over all $u\in N^{1,p}(X)$ with $u=1$ on $U_\ep$ and $0\le u\le 1$.

The following lemma (for $1<p<\infty$) is from \cite{KalS01}; we will prove that it also holds 
true in the case $p=1$. A similar proof can be used
to show that whenever $E\subset X$, we have $\text{Cap}_p(E)=\text{Cap}_{p,O}(E)$, where the latter is obtained by
considering test functions $u$ as in the definition of $\text{Cap}_p(E)$ but with the additional condition that $u\ge 1$ in a
neighborhood of $E$. See \cite{KilKM00} and \cite{BjoBS08} for further discussion.
\begin{lemma}\label{sobcapineq}
Let $E$ and $F$ be nonempty disjoint compact subsets of $\Omega$. For every 
$0<\ep<\frac{1}{2}$ there exists disjoint compact sets $E_{\ep}$ and $F_{\ep}$ of $\Omega$ such that for some
$\delta>0$
\begin{enumerate}[(i)]
\item $\overline{\bigcup_{x\in E}B(x,\delta)} = E_{\ep} $, \\
\item $\overline{\bigcup_{x\in F}B(x,\delta)} = F_{\ep} $, \\
\item $\newtcapo(E_{\ep},F_{\ep},\Omega) \leq \frac{1}{1-2\ep}\left(\newtcapo(E,F,\Omega)+2\ep\right)$.
\end{enumerate}
\end{lemma}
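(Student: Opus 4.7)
The plan is to produce a test function for $\newtcapo(E_\varepsilon,F_\varepsilon,\Omega)$ from a near-optimal test function for $\newtcapo(E,F,\Omega)$ via Lipschitz truncation combined with a quasicontinuity \emph{patch}. Fix a small $\eta>0$, to be chosen at the end. First, pick $u\in\Nloco(\Omega)$ with $0\le u\le 1$, $u=1$ on $E$, $u=0$ on $F$, and $\int_\Omega g_u\,d\mu<\newtcapo(E,F,\Omega)+\eta$, and set
\[
v_0:=\min(1,\max(0,(u-\varepsilon)/(1-2\varepsilon))).
\]
The chain rule for upper gradients gives $v_0\in\Nloco(\Omega)$, $0\le v_0\le 1$, $g_{v_0}\le g_u/(1-2\varepsilon)$, and $v_0=1$ on $\{u\ge 1-\varepsilon\}$, $v_0=0$ on $\{u\le\varepsilon\}$. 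If $u$ were continuous, these super- and sublevel sets would be open neighborhoods of $E$ and $F$, and the compactness of $E,F$ would finish the proof immediately; the obstacle is that $u\in\Nloco(\Omega)$ is only quasicontinuous.

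To deal with this, apply Theorem~\ref{quasicont} to obtain an open $U\subset X$ with $\mathrm{Cap}_1(U)<\eta$ on whose complement in $\Omega$ the function $u$ is continuous. A brief verification (any relatively open subset of $\Omega\setminus U$ has the form $W\cap(\Omega\setminus U)$ for some $W$ open in $X$, and $W\cup U$ is then open in $X$) shows that both $\{u>1-\varepsilon\}\cup U$ and $\{u<\varepsilon\}\cup U$ are open in $X$ and contain the compact sets $E$ and $F$ respectively. Writing $d_0:=\mathrm{dist}(E,F)>0$, I choose $\delta>0$ so small that the closed $\delta$-neighborhoods $E_\varepsilon$ and $F_\varepsilon$ are disjoint, are contained in $\Omega$, lie inside the two open sets above, and sit inside $B(E,d_0/3)$ and $B(F,d_0/3)$ respectively. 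Fix a Lipschitz cutoff $\rho:X\to[0,1]$ with $\rho=1$ on $B(E,d_0/3)$, $\rho=0$ on $B(F,d_0/3)$, of Lipschitz constant $L$ depending only on $d_0$, and let $\varphi\in N^{1,1}(X)$ be a near-minimizer for $\mathrm{Cap}_1(U)$: $\varphi=1$ on $U$, $0\le\varphi\le 1$, $\|\varphi\|_{N^{1,1}(X)}<2\eta$.

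Now set $v:=v_0(1-\varphi)+\rho\varphi$; then $v\in\Nloco(\Omega)$ and $0\le v\le 1$. On $E_\varepsilon\cap U$ we have $\varphi=\rho=1$, so $v=1$; on $E_\varepsilon\setminus U\subset\{u>1-\varepsilon\}$ we have $v_0=1$ and $\rho=1$, so $v=1$ again. A symmetric argument gives $v=0$ on $F_\varepsilon$, so $v$ is admissible for $\newtcapo(E_\varepsilon,F_\varepsilon,\Omega)$. The product rule for upper gradients yields $g_v\le g_{v_0}+2g_\varphi+L\varphi$, whence
\[
\int_\Omega g_v\,d\mu\le\frac{1}{1-2\varepsilon}\int_\Omega g_u\,d\mu+2(L+2)\eta<\frac{\newtcapo(E,F,\Omega)+\eta}{1-2\varepsilon}+2(L+2)\eta,
\]
and choosing $\eta$ so small that the two error terms together are bounded by $2\varepsilon/(1-2\varepsilon)$ yields~(iii). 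The main obstacle I expect is the openness verification for $\{u>1-\varepsilon\}\cup U$ and $\{u<\varepsilon\}\cup U$; this is precisely what lets us upgrade the pointwise condition $u=1$ on $E$---which might hold only on a set of zero $N^{1,1}$-capacity---to the required condition $v=1$ on a genuine $\delta$-neighborhood of $E$.
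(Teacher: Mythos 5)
Your argument is correct and follows the paper's strategy exactly: take a near-optimal admissible $u$, use quasicontinuity (Theorem~\ref{quasicont}) to produce a small-capacity open exceptional set $U$, patch with a small-capacity potential equal to $1$ on $U$, and choose $\delta$ small enough that the closed $\delta$-neighbourhoods of $E$ and $F$ land in the right sets. The only differences are cosmetic: you build the competitor as $v_0(1-\varphi)+\rho\varphi$ instead of the paper's $\min\bigl\{1,\max\{0,(u+\eta v-\ep)/(1-2\ep)\}\bigr\}$, and you obtain $\delta$ from the openness in $X$ of $\{u>1-\ep\}\cup U$ and $\{u<\ep\}\cup U$ together with compactness of $E$ and $F$ --- a clean way to guarantee $E_\ep\setminus U\subset\{u>1-\ep\}$ and $F_\ep\setminus U\subset\{u<\ep\}$, rather than through the distances $\delta_1,\delta_2$ inside $\Omega\setminus U$ used in the paper.
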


\begin{proof}
Let $0<\ep<1/2$ 
and $u\in\Nloco(\Omega)$ be such that 
$0\leq u\leq 1$, $u=1$ in $E$, $u=0$ in $F$, and with an upper gradient $g_{u}$ satisfying
\[ \int_{\Omega} g_{u} d\mu < \newtcapo(E,F,\Omega) + \ep . \]
Let  $L=4\,\text{dist}(E,F)^{-1}$. By Theorem \ref{quasicont} the function $u$ is quasicontinuous in $\Omega$. Hence there 
exists an 
open set $U$ such that $u\vert_{\Omega\setminus U}$ is continuous in $\Omega\setminus U$ and 
$\text{Cap}_{1}(U)<\ep/(1+L)$. 
Thus there exists  $v\in\No(X)$, $0\leq v\leq 1$,
such that $v=1$ in $U$ and
\[ \int_{X} v \, d\mu + \int_{X} g_{v} d\mu < \frac{\ep}{1+L} . \]  
Let $E_1=E\setminus U$ and $F_1=F\setminus U$; then $E_1, F_1$ are compact subsets of $\Omega\setminus U$ with
$E\setminus E_1=E\cap U$ and $F\setminus F_1=F\cap U$ subsets of $U$. Let
\begin{align*}
\delta_{1} & = \text{dist} \left(E_1,\{x\in\Omega\setminus U: u(x)<1-\ep\} \right) \\
\delta_{2} & = \text{dist} \left(F_1,\{x\in\Omega\setminus U: u(x)>\ep\} \right)
\end{align*}
and let 
\[\delta=\min\left\{\delta_{1},\delta_{2},\text{dist}(E,F)/10,\text{dist}(E,X\setminus\Omega)/2,\text{dist}(F,X\setminus\Omega)/2\right\}.\] 
Since $E_1$ and $F_1$ are 
compact and $u\vert_{\Omega\setminus U}$ is continuous, it follows that $\delta>0$. Let
\begin{align*}
E_{\ep} & = \overline{\bigcup_{x\in E}B(x,\delta)} \\
F_{\ep} & = \overline{\bigcup_{x\in F}B(x,\delta)}
\end{align*}
and let $\eta$ be an $L$-Lipschitz function such that $-1\leq \eta\leq 1$, $\eta=-1$ in $F_{\ep}$, and $\eta=1$ 
in $E_{\ep}$. Let
\[\widetilde{w} = \max \left\{ 0, \frac{u+\eta v-\ep}{1-2\ep} \right\} \]
and let $w=\min\{1,\widetilde{w} \}$. Now $w\in \Nloco(\Omega)$ with $w=0$ in $F_{\ep}$ and $w=1$ in $E_{\ep}$. 
Since $w$ has an upper gradient
\[ g_{w} \le \frac{g_{u}}{1-2\ep} + \frac{Lv+g_{v}}{1-2\ep}, \]
we obtain
\begin{align*}
\newtcapo(E_{\ep},F_{\ep},\Omega) & \leq \int_{\Omega} g_{w} d\mu \\
& = \frac{1}{1-2\ep}\left( \int_{\Omega} g_{u} d\mu + L\int_{\Omega}v\,d\mu + \int_{\Omega}g_{v}d\mu \right) \\
& < \frac{1}{1-2\ep}\newtcapo(E,F,\Omega)+\frac{2\ep}{1-2\ep} \qedhere
\end{align*}
\end{proof}
We recall the following result \cite[Theorem 5.9]{BjoBS08}. The proof given there is valid for 
all $1\leq p<\infty$.
\begin{theorem}[{\cite[Theorem 5.9]{BjoBS08}}]\label{lipdense}
Let $X$ be proper and assume that locally Lipschitz functions are dense in $\Np(X)$. If ~$\Omega\subset X$ is open, 
$u\in\Nlocp(\Omega)$, and $\ep>0$, then
there exists a locally Lipschitz function $v:\Omega\to\R$ such that $\Vert u-v\Vert_{\Np(\Omega)}<\ep$.
\end{theorem}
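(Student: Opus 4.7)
The plan is to reduce the local version of the density statement to the given global one by cutting $u$ into pieces with compact support in $\Omega$ via a Lipschitz partition of unity, approximating each piece in $\Np(X)$, and summing the approximants. Since $X$ is proper, I first build an exhaustion $\Omega_1\Subset\Omega_2\Subset\cdots$ of $\Omega$ with $\bigcup_k\Omega_k=\Omega$ (setting $\Omega_0=\Omega_{-1}=\emptyset$) and form the bounded open annular sets $V_k=\Omega_{k+1}\setminus\overline{\Omega_{k-1}}$, whose closures are compact subsets of $\Omega$ and which form a locally finite cover of $\Omega$. A standard doubling-space construction then produces a Lipschitz partition of unity $\{\eta_k\}$ subordinate to $\{V_k\}$ with $\sum_k\eta_k=1$ on $\Omega$.

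Because $V_k\Subset\Omega$ and $u\in\Nlocp(\Omega)$, the Leibniz-type bound $g_{\eta_k u}\le \eta_k g_u+|u|\,\mathrm{Lip}(\eta_k)$ shows that $\eta_k u$, extended by zero outside $\Omega$, lies in $\Np(X)$. I also fix slightly larger bounded open sets $V_k\Subset W_k\Subset\Omega$, still locally finite, together with Lipschitz cutoffs $\psi_k$ with $\psi_k\equiv 1$ on $\overline{V_k}$ and $\psi_k\equiv 0$ on $X\setminus W_k$. By hypothesis, for each $k$ I choose a locally Lipschitz $w_k$ on $X$ approximating $\eta_k u$ in $\Np(X)$ and set $\tilde w_k:=\psi_k w_k$; this is locally Lipschitz and supported in $\overline{W_k}$. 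Since $\eta_k u$ vanishes wherever $\psi_k\ne 1$, one has $\tilde w_k-\eta_k u=\psi_k(w_k-\eta_k u)$, and the usual product rule for upper gradients yields an estimate $\|\tilde w_k-\eta_k u\|_{\Np(X)}\le C_k\|w_k-\eta_k u\|_{\Np(X)}$; taking $w_k$ close enough to $\eta_k u$, I arrange the left-hand side to be less than $\ep\,2^{-k-1}$.

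Setting $v:=\sum_k\tilde w_k$, local finiteness of $\{W_k\}$ makes the sum locally finite, so $v$ is locally Lipschitz on $\Omega$. Writing $u=\sum_k\eta_k u$ as a locally finite sum and using the triangle inequality in $\Lp$ together with subadditivity of minimal weak upper gradients under locally finite sums, I obtain
\[
  \|u-v\|_{\Np(\Omega)}\le \sum_k\|\eta_k u-\tilde w_k\|_{\Np(X)}<\ep.
\]
The step I expect to be most delicate is calibrating the cutoffs $\psi_k$ and the intermediate sets $W_k$ so that the constants $C_k$ produced by the truncation step can always be absorbed into the geometric error $\ep\,2^{-k-1}$, while at the same time keeping $\{W_k\}$ locally finite and strictly inside $\Omega$. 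This balancing act rests essentially on the properness of $X$, which both supplies a usable exhaustion and ensures each $\overline{V_k}$ is compact in $\Omega$, so that $\eta_k u$ is genuinely an element of the global space $\Np(X)$ to which the density hypothesis applies.
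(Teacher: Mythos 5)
Your argument is correct and is essentially the same exhaustion-plus-partition-of-unity route used in the cited source~\cite{BjoBS08}: because each $\eta_k u$ is compactly supported in $\Omega$, it extends by zero to an element of $N^{1,p}(X)$, the global density hypothesis provides a locally Lipschitz approximant, and multiplying by the auxiliary cutoff $\psi_k$ preserves the $N^{1,p}(X)$-error while keeping the pieces locally finite. Your final triangle-inequality step also handles the one subtle point correctly: since $u$ is merely in $N^{1,p}_{\mathrm{loc}}(\Omega)$, neither $u$ nor $v$ individually need lie in $N^{1,p}(\Omega)$, but each summand $\eta_k u-\widetilde{w}_k$ does and the series of their norms converges, so the locally finite sum $u-v=\sum_k(\eta_k u-\widetilde{w}_k)$ has $N^{1,p}(\Omega)$-norm below $\ep$.
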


An analog of the next lemma for $p>1$ was proved in \cite{KalS01}.
\begin{lemma}\label{soblipineq}
Let $\ep$, $\delta$, $E_{\ep}$, $F_{\ep}$, $E$, $F$ and $\Omega$ be as in Lemma~\ref{sobcapineq}. Then
\[
\newtcapo(E_{\ep},F_{\ep}, \Omega) \geq \loclipcapo(E,F,\Omega).
\] 
\end{lemma}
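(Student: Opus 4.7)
The plan is to take an almost-optimal Newton--Sobolev test function $u$ for $\newtcapo(E_\ep, F_\ep, \Omega)$, approximate it by a locally Lipschitz function via Theorem~\ref{lipdense}, and then surgically modify the approximation near $E$ and $F$ to restore the boundary values ($1$ on $E$ and $0$ on $F$), without blowing up the $L^1$-norm of the upper gradient. Concretely, I would first pick $u \in \Nloco(\Omega)$ with $0 \le u \le 1$, $u \equiv 1$ on $E_\ep$, $u \equiv 0$ on $F_\ep$, and $\int_\Omega g_u\, d\mu$ within $\ep'$ of $\newtcapo(E_\ep, F_\ep, \Omega)$, where $\ep' > 0$ is a small parameter to be sent to zero at the end. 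Theorem~\ref{lipdense} provides a locally Lipschitz $v_0$ on $\Omega$ with $\Vert u - v_0 \Vert_{\No(\Omega)} < \ep'$; truncating to $\tilde v_0 := \min\{1, \max\{0, v_0\}\}$ preserves local Lipschitzness, keeps $\tilde v_0 \in [0,1]$, and does not increase the minimal upper gradient.

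Next, using that $E$ lies at distance $\delta$ from $X\setminus E_\ep$, I would let $\varphi$ be the Lipschitz cutoff built from $d(\cdot, E)$ that equals $1$ on the closed $\delta/3$-neighborhood of $E$ and vanishes outside the $2\delta/3$-neighborhood, with $g_\varphi \le 3/\delta$; an analogous $\psi$ is attached to $F$. Since $\delta \le \text{dist}(E,F)/10$, the functions $\varphi$ and $\psi$ have disjoint supports contained in $E_\ep$ and $F_\ep$ respectively. Set
\[
   w := \varphi + (1-\varphi)\,\tilde v_0, \qquad w' := (1-\psi)\, w.
\]
A direct check shows $w'$ is locally Lipschitz with $0 \le w' \le 1$, $w' \equiv 1$ on $E$ and $w' \equiv 0$ on $F$, hence admissible for $\loclipcapo(E,F,\Omega)$. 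Writing $w - 1 = (1-\varphi)(\tilde v_0 - 1)$ and $w' = (1-\psi)w$ and applying the Leibniz rule for upper gradients yields
\[
   g_{w'} \le g_{\tilde v_0} + |\tilde v_0 - 1|\, g_\varphi + w\, g_\psi,
\]
and the first term is at most $g_{v_0}$, with $\int_\Omega g_{v_0}\, d\mu \le \int_\Omega g_u\, d\mu + \ep'$ using $|g_{v_0} - g_u| \le g_{v_0 - u}$.

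The remaining two terms are the delicate ones, and constitute the main obstacle: the cutoff gradients $g_\varphi, g_\psi$ are of order $1/\delta$ on annuli of positive measure, so a naive bound would leave a fixed $1/\delta$-sized error independent of the approximation quality. The key observation rescuing the argument is that $\text{supt}(g_\varphi) \subset E_\ep$, where $u \equiv 1$, so $|\tilde v_0 - 1| \le |v_0 - u|$ there; similarly $\varphi \equiv 0$ on $\text{supt}(g_\psi) \subset F_\ep$, so $w = \tilde v_0 \le |v_0 - u|$ since $u \equiv 0$ on $F_\ep$. Integrating,
\[
   \int_\Omega \bigl( |\tilde v_0 - 1|\, g_\varphi + w\, g_\psi \bigr)\, d\mu \le \frac{6}{\delta}\, \Vert v_0 - u\Vert_{L^1(\Omega)} \le \frac{6}{\delta}\, \ep'.
\]
Sending $\ep' \to 0$ (with $\delta$ fixed) and taking the infimum over admissible $u$ yields $\loclipcapo(E,F,\Omega) \le \newtcapo(E_\ep, F_\ep, \Omega)$, which is the claimed inequality.
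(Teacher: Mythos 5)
Your proof is correct and follows essentially the same route as the paper's: start from a near-optimal $\Nloco(\Omega)$ test function $u$ for the thickened condenser $(E_\ep,F_\ep,\Omega)$, approximate by a (truncated) locally Lipschitz function via Theorem~\ref{lipdense}, and restore the boundary data on $E$ and $F$ with Lipschitz cutoffs supported inside $E_\ep$ and $F_\ep$, where $u\equiv 1$ and $u\equiv 0$ respectively, so that the cutoff-gradient error is controlled by the $L^1$-distance to $u$. Your two-step modification $w'=(1-\psi)\bigl[\varphi+(1-\varphi)\tilde v_0\bigr]$ is, on the disjoint supports of $\varphi$ and $\psi$, algebraically the same as the paper's single formula $v_i=(1-u_i)\eta_1+u_i(1-\eta_2)$, and you spell out the $[0,1]$-truncation step that the paper leaves implicit.
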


\begin{proof}
Let $\eta_{1}$ be a Lipschitz function such that 
$0\leq \eta_{1}\leq 1$, $\eta_{1}=1$ in $E$ and $\eta_{1}=0$ in $\Omega\setminus E_{\ep}$. Correspondingly, let
$\eta_{2}$ be a Lipschitz function such that $0\leq \eta_{2}\leq 1$, $\eta_{2}=1$ in $F$ and $\eta_{2}=0$ in 
$\Omega\setminus F_{\ep}$. Let $\gamma>0$ and let $u\in\Nloco(\Omega)$ be such that $0\leq u\leq 1$, $u=1$ in 
$E_{\ep}$ and $u=0$ in $F_{\ep}$ with an upper gradient $g_{u}$ such that
\[\int_{\Omega} g_{u} d\mu < \newtcapo(E_{\ep},F_{\ep}, \Omega)+\gamma.\]
By Theorem \ref{lipdense} there is a sequence of locally Lipschitz functions $u_{i}$, $0\leq u_{i}\leq 1$ such that
\[ \lim_{i\to\infty}\Vert u_i-u\Vert_{\No(\Omega)} = 0. \] 
%
Let
\[v_{i} = (1-\eta_{1}-\eta_{2})u_{i}+\eta_{1} =(1-u_i)\eta_1+u_i(1-\eta_2), \]
for $i=1,2,\ldots$ Observe that $v_{i}=1$ in $E$,  $v_{i}=0$ in $F$ and $v_{i}$ has an upper gradient 
\begin{align*} 
g_{v_{i}} & \le g_{\eta_{1}}(1-u_{i}) + \eta_{1}g_{u_{i}} + g_{u_{i}}(1-\eta_{2}) + u_{i}g_{\eta_{2}} \\
& \leq g_{\eta_{1}}(1-u_{i}) + \eta_{1}(g_{u_i-u}+g_{u}) + (g_{u_i-u}+g_{u})(1-\eta_{2}) + u_{i}g_{\eta_{2}}.
\end{align*}
Since we may assume that $g_{u}=0$ in $E_{\ep}\cup F_\ep$, 
$g_{\eta_{1}}=0$ in $\Omega\setminus E_{\ep}$, $g_{\eta_{2}}=0$ in $\Omega\setminus F_{\ep}$ 
and since $u=1$ in $E_{\ep}$ and $u=0$ in $F_{\ep}$ we have
the following estimate
\begin{align*}
\loclipcapo(E,F,\Omega) \leq & \int_{\Omega}g_{v_{i}}d\mu \\
 \leq & \;\Vert g_{\eta_{1}}\Vert_{L^{\infty}(\Omega)}\int_{E_{\ep}}|u_i-u|d\mu + \int_{E_{\ep}}g_{u_i-u}d\mu \\
& + \int_{E_{\ep}}g_{u}d\mu + \int_{\Omega}g_{u_i-u}d\mu + \int_{\Omega} g_{u}d\mu \\
& + \Vert g_{\eta_{2}}\Vert_{L^{\infty}(\Omega)}\int_{F_{\ep}}|u_{i}-u|d\mu.
\end{align*}
%
We may assume that $g_u=0$ in $E_\ep$ because $u=1$ in $E_\ep$. Thus by letting $i\to\infty$ we obtain
\begin{align*}
\loclipcapo(E,F,\Omega) & \leq\int_{\Omega} g_{u} d\mu \\
& \leq \newtcapo(E_{\ep},F_{\ep}, \Omega) + \gamma.
\end{align*}
The claim now follows by letting $\gamma\to 0$.
\end{proof}
Finally we obtain the main result of this section.
\begin{theorem}
If $E$ and $F$ are nonempty disjoint compact subsets of $\Omega$, then
\[\modo(E,F,\Omega) = \capo(E,F,\Omega)=\loclipcapo(E,F,\Omega).\]
\end{theorem}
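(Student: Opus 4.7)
The plan is to assemble the three ingredients already developed in the section. Two of the three equalities in the statement come essentially for free: the inequality chain
\[ \modo(E,F,\Omega) \le \capo(E,F,\Omega) \le \loclipcapo(E,F,\Omega) \]
is immediate from the definitions, and the leftmost equality $\modo(E,F,\Omega) = \capo(E,F,\Omega)$ is already cited from \cite[Proposition~2.17]{HeiK98}, which applies for all $1\le p<\infty$. Hence the only thing remaining is the reverse inequality
\[ \loclipcapo(E,F,\Omega) \le \capo(E,F,\Omega). \]

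To obtain this, I would first invoke Theorem~\ref{onecapsobocap} to rewrite $\capo(E,F,\Omega)$ as $\newtcapo(E,F,\Omega)$, so that the capacity is computed using measurable test functions in $\Nloco(\Omega)$. Then, for an arbitrary $\ep\in(0,1/2)$, Lemma~\ref{sobcapineq} produces disjoint compact thickenings $E_\ep$ and $F_\ep$ of $E$ and $F$ satisfying
\[ \newtcapo(E_\ep,F_\ep,\Omega) \le \tfrac{1}{1-2\ep}\bigl(\newtcapo(E,F,\Omega)+2\ep\bigr). \]
Lemma~\ref{soblipineq} applied to these thickenings then gives
\[ \loclipcapo(E,F,\Omega) \le \newtcapo(E_\ep,F_\ep,\Omega). \]
Chaining these estimates yields
\[ \loclipcapo(E,F,\Omega) \le \tfrac{1}{1-2\ep}\bigl(\capo(E,F,\Omega)+2\ep\bigr), \]
and letting $\ep\to 0^+$ gives the desired inequality.

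Combining with the trivial chain $\modo\le\capo\le\loclipcapo$ and the known equality $\modo=\capo$, we conclude that all three quantities coincide. There is no genuine obstacle in this final step; the essential work has been carried out in Theorem~\ref{onecapsobocap} and Lemmas~\ref{sobcapineq} and~\ref{soblipineq}, where the quasicontinuity of Newton--Sobolev functions and the density of locally Lipschitz functions in $\Np$ (available here via the doubling and $(1,1)$-Poincar\'e hypotheses) play the decisive role. The proof of the theorem itself is therefore essentially a three-line synthesis of the already-established lemmas, together with passing to the limit $\ep\to 0$.
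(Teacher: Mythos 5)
Your proposal is correct and follows essentially the same synthesis as the paper's own proof: reduce to the inequality $\loclipcapo(E,F,\Omega)\le\capo(E,F,\Omega)$, use Theorem~\ref{onecapsobocap} to replace $\capo$ with $\newtcapo$, then chain Lemma~\ref{soblipineq} and Lemma~\ref{sobcapineq} and let $\ep\to 0$. No meaningful differences.
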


\begin{proof}
It suffices to prove that
\[ \loclipcapo(E,F,\Omega) \leq \capo(E,F,\Omega) . \]
Let $E_{\ep}$ and $F_{\ep}$ be as in Lemma~\ref{sobcapineq}. Then by Lemma~\ref{soblipineq} and 
Lemma~\ref{sobcapineq},
\begin{align*}
\loclipcapo(E,F,\Omega) & \leq \newtcapo(E_{\ep},F_{\ep}, \Omega) \\
& \leq \frac{1}{1-2\ep}\left(\newtcapo(E,F,\Omega) +2 \ep \right) .
\end{align*}
According to Theorem \ref{onecapsobocap} we have that 
\[\newtcapo(E,F,\Omega)=\capo(E,F,\Omega)\] 
and the claim now follows by letting $\ep\to 0$.
\end{proof} 
%
%
%
%
%
%
\section{Variational BV-capacity}
In this section we study different types of variational BV-capacities in metric measure spaces. Since the class $\BV(X)$ is less restrictive than $N^{1,1}(X)$ in terms of pointwise behavior of functions, it is not obvious which 
definitions of BV-capacity are equivalent. Our main focus in this section is to study which 
pointwise conditions the test functions of capacity must satisfy
in order to establish a BV-capacity that is comparable to the variational $1$-capacity known in literature.

For the Sobolev $1$-capacity, due to the 
quasicontinuity of Sobolev type functions, one can either insist on the test functions $u$ satisfying $u=1$ in a 
neighborhood of the set whose capacity is being computed, or merely require that the test functions satisfies $u=1$ on the set.
Thus the two seemingly different notions of $1$-capacity are equal; for instance, see the earlier discussion in Section~3. 

In the Euclidean setting Federer and Ziemer~\cite{FedZ72} studied a version of 
BV-capacity of a set $E$ based on minimizing the perimeter
of sets that contain $E$ in the interior, and another version of BV-capacity  based on test functions $u$
that satisfy the
requirement that the level set $\{u\ge 1\}$ should have positive upper density 
$\mathcal{H}^{n-1}$-a.e.~in $E$. They proved that these two quantities and the 
variational $1$-capacity are equal. In \cite{KinKST08} Kinnunen et.\/al.~studied the following variational BV-capacity in a 
metric measure space setting. 
\begin{definition}
Let $K\subset X$ be compact. The variational BV-capacity of $K$ is defined as
\[\capbv(K)=\inf\|Du\|(X),\]
where the infimum is taken over all $u\in \BV(X)$ such that $u=1$ on a neighborhood of $K$, $0\leq u\le 1$, 
and the support of $u$ is a compact subset of $X$.
\end{definition}
It was shown in~\cite{KinKST08} that when $K$ is compact the above BV-capacity is comparable to the variational $1$-capacity 
and to the Hausdorff content of codimension one. In this section, we define two modifications of BV-capacity. First we recall the definitions of the \emph{approximate upper and lower limits}.
Let $u$ be a measurable function. Define
\[u^{\vee}(x)=\inf\left\{t\in\mathbb{R}: \lim\limits_{r\to 0}\frac{\mu\left(B(x,r)\cap\left\{u>t\right\}\right)}{\mu\left(B(x,r)\right)}=0\right\},\]
and
\[u^{\wedge}(x)=\sup\left\{t\in\mathbb{R}: \lim\limits_{r\to 0}\frac{\mu\left(B(x,r)\cap\left\{u<t\right\}\right)}{\mu\left(B(x,r)\right)}=0\right\}.\]
Moreover, we define
\[\overline{u}=\frac{ u^{\vee} + u^{\wedge} } {2} .\]
If $u^\vee(x)=u^\wedge(x)$, we denote by
\[\aplim\limits_{y\to x} u(y)=u^\vee(x)\]
the \emph{approximate limit} of $u$ at point $x$.
%
%
The function $u$ is \emph{approximately continuous} at $x$ if
\[\aplim\limits_{y\to x} u(y)=u(x).\]
The \emph{jump set} of function $u$, in the sense of approximate limits, is defined as 
\[S_{u}=\left\{x\in X: u^{\vee}(x)\not=u^{\wedge}(x)\right\}.\]
The classical result states that $S_u$ is countably $(n-1)$-rectifiable for $u\in\BV(\Rn)$, see for instance \cite[Theorem~5.9.6]{Zie89}. This result has an 
analog in the metric setting, where the countable $(n-1)$-rectifiability is replaced with $S_u=\bigcup_{k\in\N}S(k)$ with
$\mathcal{H}(S(k))<\infty$ for each $k\in\N$; this follows
from~\cite[Proposition~5.2 and Theorem~4.4]{AmbMP04}. 

We consider the following capacities in this section.
%
%
\begin{definition}
Let $\Omega\subset X$ be an open set and $K\subset \Omega$ be a compact set.
We define
\[\capone(K,\Omega)=\inf\int_{\Omega}g_{u}d\mu\]
where the infimum is taken over all weak upper gradients $g_{u}$ of functions $u\in N^{1,1}(\Omega)$ such that $\text{supt}(u)$ is a compact subset of $\Omega$ with $u=1$ on a neighborhood of $K$. 
Moreover, we define
\[\cbvo(K,\Omega)=\inf\Vert Du\Vert(\Omega)\]
where the infimum is taken over all $u\in\BV(\Omega)$ such that $0\leq u\leq 1$, $\text{supt}(u)$ is a compact subset of 
$\Omega$ with $u=1$ on a neighborhood of $K$. In this definition, as with $\capone(\cdot,\Omega)$, we can remove
the condition that $0\le u\le 1$. 
We define
\[\cbv(K,\Omega)=\inf\Vert Du\Vert(\Omega)\]
where the infimum is taken over all $u\in\BV(\Omega)$ such that $\text{supt}(u)$ is a compact subset of $\Omega$ and 
$\overline{u}\geq 1$ on $K$. Finally we define
\[\cbvtr(K,\Omega)=\inf\|Du\|(\Omega)\]
where the infimum is taken over all $u\in\BV(\Omega)$ such that  $0\leq u\leq 1$, $\text{supt}(u)$ is a compact subset of 
$\Omega$ and $\overline{u}\geq 1$ on $K$. 
\end{definition}
\begin{remark}
Note that we do not assume that these functions satisfy the condition 
$u\leq 1$ in the definition of $\cbv(K,\Omega)$, though we can assume that $u\ge 0$. 
\end{remark}
For any compact set $K$ we automatically have the following inequalities:
\[\cbv(K,\Omega)\leq\cbvtr(K,\Omega)\leq\cbvo(K,\Omega)\leq\capone(K,\Omega).\]
The first inequality immediately follows from comparing the test functions. The second inequality follows 
from the observation that if $u=1$ in some open neighborhood of a compact set $K$, then
$u^{\wedge}(x)\ge 1$ for every $x\in K$. The third inequality follows from the fact that $N^{1,1}(\Omega)\subset\BV(\Omega)$.

%
%
%
%
%
%
%
%
\begin{theorem}\label{BV,OvsCap}
For any compact set $K\subset\Omega$ we have
\[\capone(K,\Omega)=\cbvo(K,\Omega).\]
\end{theorem}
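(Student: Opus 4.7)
The inequality $\cbvo(K,\Omega) \le \capone(K,\Omega)$ is already recorded in the chain of inequalities immediately before the statement, since any admissible function for $\capone$ lies in $\No(\Omega) \subset \BV(\Omega)$ and for it $\Vert Du\Vert(\Omega) = \int_\Omega g_u\,d\mu$ by the identity recalled in Section~2. My plan is therefore to establish the reverse inequality. I fix $\ep > 0$, choose an admissible $u \in \BV(\Omega)$ for $\cbvo(K,\Omega)$ with $0 \le u \le 1$, $u \equiv 1$ on an open neighbourhood $V$ of $K$, $\mathrm{supt}(u) \Subset \Omega$, and $\Vert Du\Vert(\Omega) < \cbvo(K,\Omega) + \ep$, and aim to construct from $u$ a sequence of Lipschitz functions admissible in $\capone(K,\Omega)$ whose upper-gradient integrals have $\limsup$ at most $\Vert Du\Vert(\Omega)$.

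For the construction, Lemma~\ref{compact} first produces Lipschitz approximations $u_i$ with common compact support in some $U \Subset \Omega$, $u_i \to u$ in $L^1(\Omega)$, and $\int_\Omega g_{u_i}\,d\mu \to \Vert Du\Vert(\Omega)$; truncation yields $0 \le u_i \le 1$ without loss. From the family $W_s := \{x : \mathrm{dist}(x,K) < s\}$ I then choose $0 < s_0 < s_1$ with $\overline{W_{s_1}} \subset V$ and $\mu(\partial W_{s_j}) = 0 = \Vert Du\Vert(\partial W_{s_j})$ for $j=0,1$; such levels form a cocountable set since the boundaries are disjoint and both Radon measures are finite near $K$. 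Set $W_j := W_{s_j}$, let $\eta$ be a Lipschitz cutoff with $\eta \equiv 1$ on $W_0$, $\eta \equiv 0$ off $W_1$, $0 \le \eta \le 1$, Lipschitz constant $L$, and put
\[
  w_i := \eta + (1-\eta)\,u_i.
\]
Then $w_i$ is Lipschitz with values in $[0,1]$, equals $1$ on the open set $W_0 \supset K$, agrees with $u_i$ off $W_1$, and has $\mathrm{supt}(w_i) \subset \mathrm{supt}(u_i) \cup \overline{W_1} \Subset \Omega$; hence $w_i \in \No(\Omega)$ is admissible in $\capone(K,\Omega)$.

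The relevant upper-gradient estimate, obtained from the sum and product rules applied to $w_i = u_i + \eta(1-u_i)$, is $g_{w_i} \le (1+\eta)g_{u_i} + (1-u_i)g_\eta$. Integrating and using $\eta \le 1$, $\mathrm{supt}(g_\eta) \subset \overline{W_1}$, and $g_\eta \le L$, I get
\[
  \int_\Omega g_{w_i}\,d\mu \;\le\; \int_\Omega g_{u_i}\,d\mu \;+\; \int_{\overline{W_1}} g_{u_i}\,d\mu \;+\; L\int_{\overline{W_1}}(1-u_i)\,d\mu.
\]
The first term tends to $\Vert Du\Vert(\Omega)$ by Lemma~\ref{compact}, while the last tends to $0$ since $u_i \to u\equiv 1$ in $L^1(W_1)$.

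The step I expect to be the main obstacle is showing $\int_{\overline{W_1}} g_{u_i}\,d\mu \to 0$. The idea is to exploit that $u$ is constant on $V$, so $\Vert Du\Vert(V) = 0$; together with $\Vert Du\Vert(\partial W_1) = 0$ from the choice of $W_1$, this gives $\Vert Du\Vert(\overline{W_1}) = 0$ and hence $\Vert Du\Vert(\Omega \setminus \overline{W_1}) = \Vert Du\Vert(\Omega)$. Lower semicontinuity of the total variation on the open set $\Omega \setminus \overline{W_1}$ then yields
\[
  \liminf_i \int_{\Omega \setminus \overline{W_1}} g_{u_i}\,d\mu \;\ge\; \Vert Du\Vert(\Omega \setminus \overline{W_1}) \;=\; \Vert Du\Vert(\Omega),
\]
and combining this with $\int_\Omega g_{u_i}\,d\mu \to \Vert Du\Vert(\Omega)$ and $\mu(\partial W_1)=0$ forces $\limsup_i \int_{\overline{W_1}} g_{u_i}\,d\mu \le 0$. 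Consequently $\limsup_i \int_\Omega g_{w_i}\,d\mu \le \Vert Du\Vert(\Omega) < \cbvo(K,\Omega) + \ep$. Since each $w_i$ is admissible in $\capone(K,\Omega)$, this gives $\capone(K,\Omega) \le \cbvo(K,\Omega) + \ep$, and letting $\ep \to 0$ completes the argument.
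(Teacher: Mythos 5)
Your proof is correct and follows the same overall strategy as the paper's (Lipschitz approximation via Lemma~\ref{compact}, then interpolate with a cutoff $\eta$ near $K$), but it takes a detour at the upper-gradient estimate. You estimate $g_{w_i}\le (1+\eta)g_{u_i}+(1-u_i)g_\eta$ by applying the sum and product rules separately; because $(1+\eta)$ rather than $(1-\eta)$ multiplies $g_{u_i}$, the contribution from the region near $K$ survives and you are forced to make it vanish by hand. That is the purpose of your lower-semicontinuity step: choosing $W_1$ with $\Vert Du\Vert(\partial W_1)=0$ so that $\Vert Du\Vert(\overline{W_1})=0$, applying the lower semicontinuity of the total variation on the open set $\Omega\setminus\overline{W_1}$, and subtracting from $\int_\Omega g_{u_i}\,d\mu\to\Vert Du\Vert(\Omega)$ to force $\int_{\overline{W_1}}g_{u_i}\,d\mu\to 0$. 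This is valid, but it is extra work. The paper instead uses the sharper chain-rule estimate along curves (see \cite[Lemma~3.1]{KinM03}, \cite[Theorem~2.11]{BjoB10}): since $v_i=\eta+(1-\eta)u_i$, differentiating along a path and applying the Leibniz identity gives $g_{v_i}\le (1-u_i)g_\eta+(1-\eta)g_{u_i}$. Here the factor $(1-\eta)$ vanishes exactly where $\eta\equiv 1$, so $\int_\Omega(1-\eta)g_{u_i}\,d\mu\le\int_\Omega g_{u_i}\,d\mu\to\Vert Du\Vert(\Omega)$ with no auxiliary argument, and the $(1-u_i)g_\eta$ term is handled by $u\equiv 1$ on $\text{supt}(g_\eta)$ and $u_i\to u$ in $L^1$, exactly as in your $L\int_{\overline{W_1}}(1-u_i)\,d\mu\to 0$. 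So both proofs work, but the sharper product-rule bound eliminates your lower-semicontinuity step entirely and shortens the argument.
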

\begin{proof}
As pointed out above, $\cbvo(K,\Omega)\leq\capone(K,\Omega)$. In order to prove the opposite inequality, let $\ep>0$ 
and $u\in\BV(\Omega)$ be such that $u=1$ in an open neighborhood $U\Subset\Omega$
of $K$, $\text{supt}(u)$ is a compact subset of $\Omega$, and that
\[\Vert Du\Vert(\Omega)<\cbvo(K,\Omega)+\ep.\]
According to Lemma~\ref{compact}, there exist functions $u_{i}\in\mathrm{Lip}(\Omega)$ such that $u_i\to u$ in $L^1(\Omega)$ and satisfying
\[ \lim_{i\to\infty}\int_{\Omega}g_{u_{i}}d\mu=\Vert Du\Vert(\Omega).\] 
We may assume that $0\leq u_i\leq 1$ and by Lemma~\ref{compact}, 
the functions $u_i$ all have a compact support in $\Omega$. 
Let $U'$ be an open set such that $K\subset U'\Subset U$ and $\eta$ be a Lipschitz function such that
$0\leq\eta\leq 1$, $\eta=1$ in $U'$ and $\eta=0$ in $\Omega\setminus U$. 
We define 
\[v_i=\eta+(1-\eta)u_i=(1-u_i)\eta+u_i\]
for $i=1,2,\ldots$ and note that $v_{i}\in N^{1,1}(\Omega)$ with compact support in $\Omega$. Furthermore, the absolute continuity of $v_i$ on $p$-modulus almost every path yields that for every $i\in\N$ the function 
$v_i$ has an upper gradient $g_{v_{i}}$ satisfying
\[
    g_{v_{i}}\le |(1-u_{i})g_{\eta}+(1-\eta)g_{u_{i}}|=(1-u_{i})g_{\eta}+(1-\eta)g_{u_{i}},
\]
%
see \cite[Lemma 3.1]{KinM03} and \cite[Theorem 2.11]{BjoB10}. Since we may assume that $g_\eta=0$ in $\Omega\setminus U$ we have the following estimate
\begin{align*}
\limsup\limits_{i\to\infty}\int_{\Omega}g_{v_{i}}d\mu
&\leq\limsup_{i\to\infty}\int_{\Omega}(1-u_{i})g_{\eta}d\mu+\limsup_{i\to\infty}\int_{\Omega}(1-\eta)g_{u_{i}}d\mu\\
&\leq\Vert g_{\eta}\Vert_{\infty}\limsup_{i\to\infty}\int_{U}|u-u_{i}|d\mu+\Vert Du\Vert(\Omega)\\
&=\Vert Du\Vert(\Omega) < \cbvo(K,\Omega)+\ep.
\end{align*}
Thus, for every $\ep>0$ we find an admissible $N^{1,1}(\Omega)$--function with a compact support in $\Omega$ such that
\[
    \capone(K,\Omega)\leq\limsup_{i\to\infty}\int_{\Omega}g_{v_i}d\mu\leq\Vert Du\Vert(\Omega)<\cbvo(K,\Omega)+\ep.
\]
The claim now follows by letting $\ep\to 0$.
\end{proof}
Before proceeding any further in the study of comparisons of capacities, we give a counterexample demonstrating that $\cbv(K,\Omega)\not=\cbvo(K,\Omega)$ in general.

In this example we use the fact that if a ball $B\subset E$, where $E\subset\mathbb{R}^2$ is a bounded set with finite perimeter, then $P(B)\leq P(E)$. This type of results for convex sets are part
of the general folklore. For the details of one such result which is sufficient for our purposes, see \cite[Proof of Proposition~3.5 and Appendix]{LamP07}.

\begin{example}
Let $m$ denote the ordinary Lebesgue measure and let $X=(\mathbb{R}^{2},|\cdot|,\mu)$ where $d\mu=w\, dm$ with
\begin{align*}
w(x)=
\begin{cases}1&\quad \text{ when } |x|\leq 1,\\
8 &\quad \text{ when } |x|>1.\\
\end{cases}
\end{align*}
Let $K=\overline{B}(0,1)$ and $\Omega=B(0,10)$. Let $\ep>0$. 
The coarea formula implies that 
%
\begin{align*}
\cbvo(K,\Omega)
&\geq\Vert Du\Vert(\Omega)-\ep \\
&\geq P(\{u>t\},\Omega)-\ep
\end{align*}
for some admissible function $u\in\BV(\Omega)$ and for some $0<t<1$. Furthermore, since $u$ is compactly supported in
$\Omega$, via zero extension we obtain that
\[\cbvo(K,\Omega)\ge P(\{u>t\},X)-\ep.\]
From the fact that $u=1$ in a neighborhood of $\overline{B}(0,1)$ 
it follows that $\{u>t\}\Supset B(0,1+\delta)$ for some $\delta>0$.
 
Since the perimeter measure vanishes outside the measure theoretic boundary, we can apply the result mentioned in the discussion preceding this example also in 
this weighted case for sets $\{u>t\}$ and $B(0,1+\delta)$ to obtain
\begin{align*}P(\{u>t\},X)-\ep&\geq P(B(0,1+\delta),X)-\ep=16\pi+16\pi\delta-\ep.
\end{align*}
Thus letting $\ep\to 0$, we get 
\begin{align*}
\cbvo(K,\Omega)&\geq 16\pi.
\end{align*}
By approximating $2\chi_{K}$ with Lipschitz functions 
\[u_i=2\min\left\{1,\max\left\{0,-2i|x|+2i-1\right\}\right\},\]
see \cite[Example 4.5]{HakKin} for details, it follows that 
\[\cbv(K,\Omega)\leq \Vert D(2\chi_{K})\Vert(\Omega)\leq\liminf\limits_{i\to\infty}\int_{\Omega}|Du_i|d\mu\leq 4\pi.\]
Therefore, in this example we get 
\begin{align*}
\cbv(K,\Omega)<\cbvo(K,\Omega).
\end{align*}
\end{example}

In the light of the above example, our goal now is to show that these capacities are comparable. To do so, by 
the previous observations we need to prove that there is a constant $C>0$ such that
\[\frac{1}{C}\,\cbvo(K,\Omega)\leq\cbvtr(K,\Omega)\leq C\,\cbv(K,\Omega)\]
for every compact set $K\subset\Omega$. We start with the following theorem. In the proof we combine discrete convolution and boxing inequality type arguments
to obtain the desired result.
%
%
%
%
%
%
\begin{theorem}
There exists a constant $C>0$, depending only on the constants in the $(1,1)$-Poincaré inequality and the doubling constant of the measure, such that
\[\cbvo(K,\Omega)\leq C\,\cbvtr(K,\Omega)\]
for any compact set $K\subset\Omega$.
\end{theorem}
\begin{proof}
Let $u\in \BV(\Omega)$, with $0\leq u\leq 1$, be an admissible function for computing $\cbvtr(K,\Omega)$, such that
\[\Vert Du\Vert(\Omega)<\cbvtr(K,\Omega)+\ep.\]
Since $\overline{u}\geq 1$ on $K$ and $0\le u\le 1$, we have $u^{\wedge}\geq 1$ on $K$. So by the definition of 
$u^\wedge$, for every $x\in K$ and $t<1$ we have
\[\lim\limits_{r\to 0}\frac{\mu\left(B\left(x,r\right)\cap\left\{u<t \right\}\right)}{\mu\left(B(x,r)\right)}=0.\]
For $t\ge 0$ set $E_t=\{u>t\}$. Thus for every $x\in K$ and $0\le t<1$,
\begin{equation}\label{A-1}
   \lim\limits_{r\to 0}\frac{\mu\left(B\left(x,r\right)\cap E_t\right)}{\mu\left(B(x,r)\right)}=1.
\end{equation}
By the coarea formula there exists $0<t_{0}<1$ such that
\begin{equation}\label{A-2}
   P(E_{t_{0}},\Omega)\le \Vert Du\Vert(\Omega)<\cbvtr(K,\Omega)+\ep.
\end{equation}
%
%
%
%
Let 
\[E=\left\{x\in\Omega:\lim\limits_{r\to 0}\frac{\mu\left(B\left(x,r\right)\cap E_{t_{0}}\right)}{\mu\left(B(x,r)\right)}=1\right\}.\]
Observe that 
$\text{dist}\,(E,X\setminus\Omega)=\delta>0$, and due to the Lebesgue differentiation theorem, 
$\mu(E\Delta E_{t_{0}})=0$. Thus $P(E_{t_{0}},\Omega)=P(E,\Omega)=P(E,X)$. 
The equation~(\ref{A-1}) yields that
%
%
$K\subset E$.
We now apply discrete convolution, as in the proof of Theorem 6.4 in \cite{KinKST08},
to the function $w=\chi_{E}$ with parameter $\rho=\delta/(40\tau)$, where $\tau$ is the constant in the weak $(1,1)$-Poincar\'e 
inequality, to obtain 
\[v(x)=\sum\limits_{i=1}^{\infty}\varphi_i(x)w_{B_{i}}.\]
Here $\left\{B_i\right\}_{i=1}^{\infty}$ is the 
covering related to the discrete convolution. 

Let us make few comments on this technique. The radius of each ball $B_i$ is $\rho=\delta/(40\tau)$. 
The partition of unity $\{\varphi_i\}_{i=1}^{\infty}$ can be constructed in such way that $\varphi_i$ is a $C/\delta$-Lipschitz function with $\varphi_i\geq 1/C$ in $B_i$ and $\text{supt}(\varphi_i)\subset 2B_i$ for every $i=1,2,\ldots$, where the constant $C$  depends only on the overlap constant of the covering related to the discrete convolution, see \cite{KinKST08}. 
The overlap constant does not depend on the radii of the balls used in the covering. We also point out that 
in \cite{KinKST08} the authors assume that $\mu(X)=\infty$, but the construction and properties of discrete convolution do not depend on this assumption at all, and hence that assumption is not needed in this paper.

The proof of Theorem 6.4 in~\cite{KinKST08} implies that $v\in N^{1,1}(\Omega)\cap C(\Omega)$ with 
$0\leq v\leq 1$ and $\text{supt}(v)$ is a compact subset of $\Omega$. 

We divide the set $E$ into two parts
\[E_1=\left\{x\in E:\,\frac{\mu\left(B_i\cap E\right)}{\mu(B_i)}>\frac{1}{2}\; \text{for some}\; B_i\;\text{with}\; x\in B_i\right\}\]
and
\[E_2=\left\{x\in E:\,\frac{\mu\left(B_i\cap E\right)}{\mu(B_i)} \leq\frac{1}{2}\; \text{for every}\;B_i\;\text{with}\;x\in B_i\right\}.\]
%
%
%
%
%
%
Let $x\in E_1$. By the definition of $E_1$ there is a ball $B_i$ such that $x\in B_i$ and $w_{B_i}>1/2$. 
Since $\varphi_i\ge 1/C$ in $B_i$ we have that 
\[
   v\ge \varphi_i w_{B_i}\ge 1/(2C)>0
\]
in $B_i$ and therefore
$\widetilde{v}=2Cv\geq 1$ in a neighborhood of the set $E_1$. Note that $\widetilde{v}\in N^{1,1}(\Omega)\subset \BV(\Omega)$ 
with compact support in $\Omega$ and with the total variation
\[\Vert D\widetilde{v}\Vert(\Omega)\leq C\Vert D\chi_{E}\Vert(\Omega)=C\,P(E,\Omega),\]
see the proof of~\cite[Theorem~6.4]{KinKST08} for details. 

Let $x\in E_2$ and $B_i$ be such that $x\in B_i$. Let $B=B(x,\rho)$ and notice that $B_i\subset 2B\subset 4B_i$. 
Thus we can estimate
\begin{align*}
\frac{\mu\left(2B\cap E\right)}{\mu\left(2B\right)}&=\frac{\mu\left(2B\right)}{\mu\left(2B\right)}-\frac{\mu\left(2B\setminus E\right)}{\mu\left(2B\right)}\\
&\leq 1 - \frac{\mu\left(B_i\setminus E\right)}{\mu\left(4B_i\right)}\\
&\leq 1 - \frac{\mu\left(B_i\setminus E\right)}{C_D^{2}\mu\left(B_i\right)}.
\end{align*}
Here $C_{D}$ is the doubling constant of the measure $\mu$. By the definition of $E_2$ we know that
\[
  \frac{\mu(B_i\setminus E)}{\mu(B_i)}\ge \frac{1}{2}.
\]
Hence 
\[
   \frac{\mu\left(2B\cap E\right)}{\mu\left(2B\right)} \le 1- \frac{1}{2\, C_D^2}=c<1.
\]
%
%
Since $x\in E_2$ is a point of Lebesgue density $1$ for $E_{t_{0}}$ and 
$\mu(E\Delta E_{t_{0}})=0$, we have that
\[\lim\limits_{r\to 0}\frac{\mu\left(B\left(x,r\right)\cap E\right)}{\mu\left(B(x,r)\right)}=1.\]
We now follow the proof of the boxing inequality~\cite[Theorem~3.1]{KinKST08}, see also~\cite[Lemma~3.1]{Mak09}. For every $x\in E_2$ there 
exists $r_x$ with $0<r_x\leq 2\rho$\, such that
\[\frac{\mu\left(B\left(x,r_x\right)\cap E\right)}{\mu\left(B(x,r_x)\right)}\leq c\]
and
\[\frac{\mu\left(B\left(x,r_x/2\right)\cap E\right)}{\mu\left(B(x,r_x/2)\right)}>c.\]

From the choice of $r_x$ it follows that
\begin{align*}\mu\left(B(x,r_x)\setminus E\right)&=\mu\left(B(x,r_x)\right)-\mu\left(B(x,r_x)\cap E\right)\\
&\geq\mu\left(B(x,r_x)\right)-c\, \mu\left(B(x,r_x)\right)\\
&=(1-c)\mu\left(B(x,r_x)\right).
\end{align*}
Furthermore, we have that
\begin{align*}
\frac{\mu\left(B\left(x,r_x\right)\cap E\right)}{\mu\left(B(x,r_x)\right)}&\geq\frac{\mu\left(B\left(x,r_x/2\right)\cap E\right)}{C_{D}\mu\left(B(x,r_x/2)\right)}>\frac{c}{C_{D}}.
\end{align*}
Thus by the relative isoperimetric inequality we obtain
\begin{align*}
\frac{\mu\left(B(x,r_x)\right)}{r_x}&\leq C\, \frac{\min\left\{\mu\left(B\left(x,r_x\right)\cap E\right),\mu\left(B(x,r_x)\setminus E\right)\right\}}{r_x}\\
&\leq C P\left(E,B\left(x,\tau r_x\right)\right),
\end{align*}
where the constant $C$ depends only on the doubling constant of the measure and the constants in the weak $(1,1)$-Poincaré inequality. We apply the standard covering argument to
the family of balls $B(x,\tau r_x)$, $x\in E_2$, to obtain pairwise disjoint balls $B(x_j, \tau r_j)$, $j\in\N$, such that
\[E_2\subset\bigcup\limits_{x\in E_2}B(x,\tau r_x)\subset\bigcup\limits_{j=1}^{\infty}B(x_j,5\tau r_j).\]
By using the doubling property of $\mu$ and the fact that $P(E,\cdot)$ is a Borel measure and that the balls
$B(x_j,\tau r_j)$ are pairwise disjoint, we obtain
\begin{align*}
\sum\limits_{j=1}^{\infty}\frac{\mu\left(B(x_j,5\tau r_j)\right)}{5\tau r_j}
&\le C \sum\limits_{j=1}^{\infty}\frac{\mu\left(B(x_j,r_j)\right)}{r_j}\\
&\leq C\sum\limits_{j=1}^{\infty}P\left(E,B\left(x_j,\tau r_j\right)\right)\\
&=C P\left(E,\bigcup\limits_{j=1}^{\infty}B(x_j,\tau r_j)\right)\\ 
&\leq C P(E,\Omega).
\end{align*}
For positive integers $j$ we define functions $\psi_j\in\text{Lip}(\Omega)$ by
\[\psi_j(x)=\left(1-\frac{\text{dist}\left(x,B(x_j,5\tau r_j)\right)}{5\tau r_j}\right)_{+}.\]
Let
\[\psi=\sup\limits_{1\leq j<\infty}\psi_j\]
and observe that $\psi=1$ in a neighborhood of the set $E_2$. The radius of the covering balls $B(x_j,5\tau r_j)$ satisfies
$5\tau r_j\le\delta/4$ and since $\delta=\text{dist}(E,X\setminus\Omega)\le \text{dist}(E_2,X\setminus\Omega)$, 
we know that $\psi$ has
compact support in $\Omega$. Moreover, $\psi$ has an upper gradient 
\[ g_{\psi}=\sup\limits_{1\leq j<\infty}\frac{1}{5\tau r_j}\chi_{B(x_j,10\tau r_j)},\]
see \cite[Lemma 1.28]{BjoB10}. For the upper gradient we have the estimate
\begin{align*}
\int_{\Omega}g_{\psi}d\mu&\leq\int_{\Omega} \sum\limits_{j=1}^{\infty}\frac{1}{5\tau r_j}\chi_{B(x_j,10\tau r_j)}d\mu\\
&=\sum\limits_{j=1}^{\infty}\int_{B(x_j,10\tau r_j)}\frac{1}{5\tau r_j}d\mu\\
&\leq C_{D}\sum\limits_{j=1}^{\infty}\frac{\mu(B(x_j,5\tau r_j))}{5\tau r_j}\\
&\leq C\,P(E,\Omega).
\end{align*}
Thus $\psi\in N^{1,1}(\Omega)$.
%

Let $\nu=\min\{\widetilde{v} + \psi ,1 \}$ where $\widetilde{v}$ was constructed in considering $E_1$,
and notice that $\nu\in N^{1,1}(\Omega)\subset\BV(\Omega)$, $0\leq\nu\leq 1$ and $\text{supt}(\nu)$ is a 
compact subset of $\Omega$. Moreover $\nu=1$ in a neighborhood of $E\supset K$. By using the properties of total variation measure we have that
\begin{align*}
\capone(K,\Omega)&\leq\|D\nu\|(\Omega)\\
&\leq \|D\widetilde{v}\|(\Omega)+\|D\psi\|(\Omega)\\
&\leq \|D\widetilde{v}\|(\Omega)+\int_{\Omega}g_{\psi}d\mu\\
&\leq C P(E, \Omega).
\end{align*} 
In the previous estimate the third inequality follows from the fact that $\Vert D\psi\Vert(\Omega)\leq\Vert g_\psi\Vert_{L^{1}(\Omega)}$ for
$\psi\in\No(\Omega)$, see \cite[Theorem 6.2.2]{Cam08}.
Since $P(E,\Omega)=P(E_{t_{0}},\Omega)$, the inequality~(\ref{A-2}) yields 
\[\capone(K,\Omega)=\cbvo(K,\Omega)<C\left(\cbvtr(K,\Omega)+\ep\right).\]
The claim now follows by letting $\ep\to 0$.
\end{proof}

Thus $\cbvo(\cdot, \Omega)$ and $\cbvtr(\cdot,\Omega)$ are comparable with a constant which does not depend on $\Omega$. We obtain the remaining inequality in the next theorem.
%
%
%

\begin{theorem}
There exists a constant $C>0$, depending only on the constants in the $(1,1)$-Poincaré inequality and the doubling constant of the measure, such that
\[\cbvtr(K,\Omega)\leq C\,\cbv(K,\Omega)\]
for every compact set $K\subset\Omega$.
\end{theorem}

\begin{proof} 
Let $\ep>0$ and $u\in\BV(\Omega)$ such that $\text{supt}(u)$ is a compact subset of $\Omega$, $\overline{u}\geq 1$ 
in $K$, and
\[\Vert Du\Vert(\Omega)<\cbv(K,\Omega)+\ep.\]
We may assume that $0\leq u\leq 2$ since the truncated function $\widetilde{u}=\min\{u,2\}$ is an admissible function with smaller total variation.
This is due to the observations that 
for any $t<2$ we have $\{u>t\}=\{\widetilde{u}>t\}$ and $\{u<t\}=\{\widetilde{u}<t\}$.
Hence $u^{\vee}(x)\geq 2$ implies that $\widetilde{u}^{\vee}(x)=2$,  and if
$u^{\vee}(x)< 2$ then $u^{\vee}(x)=\widetilde{u}^{\vee}(x)$ and $u^{\wedge}(x)=\widetilde{u}^{\wedge}(x)$.

We divide the set $K$ into two parts;
\[K_{1}=\left\{x\in K: u^{\vee}(x)-u^{\wedge}(x)<1/2\right\}\]
and
\[K_{2}=\left\{x\in K: u^{\vee}(x)-u^{\wedge}(x)\geq 1/2\right\}.\]
Note that $K_{2}\subset S_{u}$ and thus by~\cite[Theorem~5.3]{AmbMP04},
\begin{align}\label{A-4}
\haus(K_{2})\leq \int_{K_2}d\haus
&\leq \frac{2}{\alpha}\int_{K_2}\theta_u d\haus\notag \\
&\leq C\Vert Du\Vert(S_u)\notag \\
&\leq C\Vert Du\Vert(\Omega).
\end{align}
Here $C=2/\alpha$, and we used the estimate that for $x\in K_2$
\[\theta_u(x)\geq\int\limits_{u^\wedge(x)}^{u^\vee(x)}\alpha\,dt\geq\frac{\alpha}{2},\]
where  $\theta_u$ is defined as in \cite[Theorem 5.3]{AmbMP04} and $\alpha>0$ depends only on the doubling constant of the measure and the constants related to  
the weak $(1,1)$-Poincar\'e inequality, see \cite[Theorem 4.4]{AmbMP04} and \cite[Theorem 5.3]{AmbMP04}.
Hence the constant $C$ in the above estimate depends only on the doubling constant of the measure and the constants related to the Poincaré inequality. 
Thus for $\delta>0$ there is a covering
\[\bigcup\limits_{i=1}^{\infty}B(x_{i},r_i)\supset K_{2}\]
such that $r_{i}\leq \delta$ for every $i=1,2,\ldots$ with the estimate
\begin{equation}\label{A-3}
   \sum\limits_{i=1}^{\infty}\frac{\mu\left(B\left(x_i,r_i\right)\right)}{r_i}<\haus(K_{2})+\ep.
\end{equation}
By choosing $\delta$ to be small, we may assume that 
\[\delta<\frac{\text{dist}\left(K,X\setminus\Omega\right)}{10}.\]
We construct an admissible test function to estimate $\cbvtr(K,\Omega)$ as follows. Let us define 
functions $\varphi_i$ by
\[\varphi_i(x)=\left(1-\frac{\text{dist}\left(x,B\left(x_i,r_i\right)\right)}{r_i}\right)_{+}\]
for $i=1,2,\ldots$ and
\[\varphi_{0}=\min\left\{2u,1\right\}.\]
Let
\[\varphi=\sup\limits_{0\leq i<\infty}\varphi_{i}\]
and observe that $0\leq \varphi\leq 1$ and $\text{supt}(\varphi)$ is a compact subset of $\Omega$. If $x\in K_2$, then $x\in B(x_i,r_i)$ 
for some index $1\leq i<\infty$ and $\varphi\geq\varphi_i\geq 1$ in this neighborhood of $x$. 
Thus $\varphi^{\vee}(x)\geq \varphi^{\wedge}(x)\geq 1$ and so $\overline{\varphi}(x)\geq 1$. To obtain similar estimate for the
points of set $K_1$ notice that for $t<1$ we have
\[\left\{\varphi < t\right\}\subset\left\{\varphi_0 < t\right\}\subset\left\{ 2u < t\right\}.\]
Since $u^{\wedge}\geq 1/2$ in $K_1$, it follows that for every $x\in K_1$ and for any $t<1/2$,
\[\lim\limits_{r\to 0}\frac{\mu\left(B(x,r)\cap\left\{u<t\right\}\right)}{\mu\left(B(x,r)\right)}=0.\]
Thus for every $x\in K_1$ and for any $t<1$ we have that
\[\lim\limits_{r\to 0}\frac{\mu\left(B(x,r)\cap\left\{2u<t\right\}\right)}{\mu\left(B(x,r)\right)}=0,\]
and hence
\[\lim\limits_{r\to 0}\frac{\mu\left(B(x,r)\cap\left\{\varphi<t\right\}\right)}{\mu\left(B(x,r)\right)}=0.\]
This implies that $\varphi^{\wedge}\geq 1$ in $K_1$ and therefore $\overline{\varphi}\geq 1$ in 
$K_1$. We have thus obtained that  $\overline{\varphi}\geq 1$ in $K$.
Let
\[\psi_k=\max\limits_{0\leq i\leq k}\varphi_i\]  
and note that $\psi_k\to \varphi$ pointwise as $k\to\infty$. We apply the dominated convergence theorem to obtain 
\[\lim\limits_{k\to\infty}\int_{\Omega}|\varphi-\psi_k|d\mu=\int_{\Omega}\lim\limits_{k\to\infty}\left(\varphi-\psi_k\right)d\mu=0\]
and thus $\psi_k\to \varphi$ in $L^1(\Omega)$ as $k\to\infty$. By the properties of the total variation measure we have
\begin{align*}\Vert D\varphi\Vert(\Omega)&\leq\liminf\limits_{k\to\infty}\Vert D\psi_k\Vert(\Omega)\\
&\leq\liminf\limits_{k\to\infty}\sum\limits_{i=0}^{k}\Vert D\varphi_i\Vert(\Omega)\\
&=\Vert D\varphi_0\Vert(\Omega)+\sum\limits_{i=1}^{\infty}\Vert D\varphi_i\Vert(\Omega).
\end{align*}
Using the definitions of functions $\varphi_i$ we have the estimates
\[\Vert D\varphi_0\Vert (\Omega)\leq 2\Vert Du\Vert(\Omega),\]
and by~(\ref{A-3}) and~(\ref{A-4}),
\begin{align*}
\sum\limits_{i=1}^{\infty}\Vert D\varphi_i\Vert(\Omega)
&\leq\sum\limits_{i=1}^{\infty}\int_{B(x_i,2r_i)}\frac{1}{r_i}d\mu\\
&\leq C_{D}\sum\limits_{i=1}^{\infty}\frac{\mu\left(B\left(x_{i},r_i\right)\right)}{r_i}\\
&\leq C_{D}\left(C\Vert Du\Vert(\Omega)+\ep\right)\\
&\leq C\left(\Vert Du\Vert(\Omega)+\ep\right).
\end{align*}
Thus
\begin{align*}
\cbvtr(K,\Omega)&\leq\Vert D\varphi\Vert(\Omega)\\
&\leq C\left(\Vert Du\Vert(\Omega)+\ep\right)\\
&\leq C\left(\cbv(K,\Omega)+\ep\right).
\end{align*}
The claim now follows by letting $\ep\to 0$. \qedhere
\end{proof}

The constant $C$ in the previous theorem does not depend on $\Omega$. We have thus obtained that there exists a constant $C>0$ such that
\[\frac{1}{C}\,\cbvo(K,\Omega)\leq\cbvtr(K,\Omega)\leq C\,\cbv(K,\Omega)\]
for any compact set $K\subset\Omega$. Combining these estimates we may state the following.

%
%
%
%
%
\begin{corollary}
Let $\Omega\subset X$ be open and $K$ a compact subset of $\Omega$. Then
\[
\capone(K,\Omega)=\cbvo(K,\Omega)\approx\cbvtr(K,\Omega)\approx\cbv(K,\Omega),
\]
with the constants of comparison depending only on the doubling constant of the measure and the constants in the Poincaré inequality.
\end{corollary}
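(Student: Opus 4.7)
The corollary is a direct assembly of results already proved in Section~4, so my plan is simply to chain together the established equalities and inequalities rather than to produce any new argument.

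First, I would invoke Theorem~\ref{BV,OvsCap} to record the equality $\capone(K,\Omega)=\cbvo(K,\Omega)$; this handles the first clause and reduces the remaining task to proving two-sided comparisons between $\cbvo(K,\Omega)$, $\cbvtr(K,\Omega)$, and $\cbv(K,\Omega)$. Next I would recall the trivial chain
\[
\cbv(K,\Omega)\leq\cbvtr(K,\Omega)\leq\cbvo(K,\Omega),
\]
which was observed immediately after the definitions by inspection of the admissible test classes: any admissible function for $\cbvo$ has $u^{\wedge}\geq 1$ on $K$ (since $u=1$ in a neighbourhood of $K$), and the restriction $0\leq u\leq 1$ in the middle class is stronger than no restriction at all. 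This provides the ``easy'' halves of both comparisons.

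For the reverse directions, I would cite the two theorems just proved: the one giving $\cbvo(K,\Omega)\leq C\,\cbvtr(K,\Omega)$ (via the coarea formula, the boxing inequality, and the discrete convolution construction) and the one giving $\cbvtr(K,\Omega)\leq C\,\cbv(K,\Omega)$ (via splitting $K$ into the approximately continuous part and the jump part, and using the Hausdorff-measure estimate for $S_u$ from~\cite{AmbMP04}). Combining these with the trivial chain yields
\[
\cbv(K,\Omega)\leq\cbvtr(K,\Omega)\leq\cbvo(K,\Omega)\leq C\,\cbvtr(K,\Omega)\leq C^{2}\,\cbv(K,\Omega),
\]
which is exactly the two comparability statements $\cbvo\approx\cbvtr\approx\cbv$.

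Since no step is new, there is no real obstacle; the only thing worth noting is that the comparison constants depend only on the doubling constant and the constants in the weak $(1,1)$-Poincar\'e inequality, because both of the cited theorems carefully tracked this dependence (in particular independently of $\Omega$). I would conclude the proof with a single sentence confirming this uniformity of constants.
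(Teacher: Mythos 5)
Your proposal is correct and matches the paper's argument exactly: the corollary is obtained by combining the equality from Theorem~\ref{BV,OvsCap}, the trivial inclusion chain $\cbv\leq\cbvtr\leq\cbvo$, and the two comparison theorems $\cbvo\leq C\,\cbvtr$ and $\cbvtr\leq C\,\cbv$, noting that both theorems tracked constants independent of $\Omega$. Nothing further is needed.
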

Thus, on the collection of all compact subsets of $\Omega$
we have $\capone(\cdot,\Omega)$, $\cbv(\cdot,\Omega)$, 
$\cbvtr(\cdot,\Omega)$ and $\cbvo(\cdot,\Omega)$ are equivalent by two sided estimates.
%
%
%
%
%
%
%
%
%
\section{Codimension one Hausdorff measure and Capacity}
In this section we study comparisons between the variational capacity and the capacity where the norm of the function is also included.
In the previous sections we studied relative capacities of compact sets. Should the metric space $X$ be parabolic,
then when $\Omega=X$ the relative capacities of every set are zero; 
see for example the discussion in~\cite[Section~7]{KinKST08}. However, the total capacities considered in this section do not have
this drawback. Furthermore, since capacities in general are not inner measures, it would be insufficient for potential theory
to consider only compact sets; in this section we consider more general sets.
 
Let $B\subset X$ be a ball. For $E\subset B$, we define the following four versions of capacity:
\begin{align*}
  \text{cap}_{1}(E, B)&=\inf\int_B g_u\, d\mu,\\
  \text{cap}_{\BV,\text{O}}(E, B)&=\inf\Vert Du\Vert(B),\\
  \text{Cap}_{1}(E, B)&=\inf\int_B\left(|u|+g_u\right)\, d\mu,\\
  \text{Cap}_{\BV,\text{O}}(E, B)&=\inf\left(\int_B |u|\, d\mu+\Vert Du\Vert(B)\right),
\end{align*}
where the first and third infimum are taken over all $u\in N^{1,1}(B)$ and all upper gradients $g_u$ of $u$ such that $u=1$ on $E$
and $u$ has compact support in $B$, 
and the second and fourth infimum are taken over all
$u\in\BV(B)$ such that $u=1$ in a neighborhood of $E$ and $u$ has compact support in $B$. In each of the above definitions we consider the infimum to be infinite, if there are no such functions. 
These definitions are consistent with the
notions studied in previous sections for the case that $E$ is compact.
Observe that when $E\Subset B$ we could also require the test functions
to satisfy the condition $u=1$ in a neighborhood of $E$ in the definition of $\text{cap}_{1}(E,B)$ and
$\text{Cap}_{1}(E,B)$ to obtain the same quantities. This fact can be proved by using a cutoff function and quasicontinuity of functions in $N^{1,1}(B)$, see for instance~\cite[Theorem~6.19]{BjoB10} for
the proof of an analogous result.

\begin{remark}
We could actually extend the above definitions of capacities to arbitrary sets $E\subset X$ by requiring that the test functions satisfy the pointwise conditions in $E\cap B$.
\end{remark}

Our goal is to prove that the above capacities have the same null sets. In order to compare variational capacities with the total capacities 
where the norm of the function is also included, we apply a metric measure space version of Sobolev inequality, see~\cite{HajK00},~\cite[Lemma~2.10]{KinS01},~\cite[Proposition~3.1]{Bjo02}
and~\cite[Section~5.4]{BjoB10} for general discussion.

\begin{lemma} 
Let $B\subset X$ be a ball such that $X\setminus \overline{B}$ is nonempty. For every $E\subset B$ we have
\[\mathrm{cap}_{1}(E,B)\approx \mathrm{Cap}_{1}(E,B)\] 
with the constants of comparison depending solely on the doubling constant, the Poincar\'e constants, and the 
ball $B$. 
\end{lemma}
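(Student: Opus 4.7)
The inequality $\mathrm{cap}_1(E,B)\leq \mathrm{Cap}_1(E,B)$ is immediate from the definitions: both quantities are infima over the same admissible class of test functions, while the functional defining $\mathrm{Cap}_1$ exceeds that of $\mathrm{cap}_1$ by the nonnegative term $\int_B |u|\,d\mu$.

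For the reverse inequality, the plan is to reduce to a zero-boundary Sobolev-type estimate: there is a constant $C>0$, depending only on the doubling and Poincar\'e constants and on the ball $B$, such that
\[
  \int_B |u|\,d\mu \leq C\int_B g_u\,d\mu
\]
for every $u\in\No(B)$ with compact support in $B$ and every upper gradient $g_u$ of $u$. Once this is granted, any admissible test function satisfies $\int_B (|u|+g_u)\,d\mu\leq (C+1)\int_B g_u\,d\mu$, and taking the infimum yields $\mathrm{Cap}_1(E,B)\leq (C+1)\,\mathrm{cap}_1(E,B)$.

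To prove the Sobolev inequality, I use the hypothesis $X\setminus\overline{B}\neq\emptyset$ in an essential way. Since the complement is open and $\mu$ is positive on balls, there exists a ball $B(y,s)\subset X\setminus\overline{B}$ with $\mu(B(y,s))>0$. Let $B=B(x_0,r)$ and fix $R>r$ large enough that $B(y,s)\subset B^*:=B(x_0,R)$; by the doubling property applied to a chain joining $B(y,s)$ to $B^*$, one obtains $\mu(B^*\setminus\overline{B})\geq c\,\mu(B^*)$ for some $c>0$ depending on $B$ and $C_D$. Extend $u$ and $g_u$ by zero to $X$; since $u$ has compact support in $B$, the extension lies in $\No(X)$ with the extended $g_u$ as an upper gradient, and in particular $g_u\equiv 0$ off $B$. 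The weak $(1,1)$-Poincar\'e inequality applied to $B^*$, combined with doubling and the support property of $g_u$, yields
\[
  \int_{B^*}|u-u_{B^*}|\,d\mu \leq C_P R\,\frac{\mu(B^*)}{\mu(\tau B^*)}\int_{\tau B^*} g_u\,d\mu \leq C_P R \int_B g_u\,d\mu.
\]
Because $u\equiv 0$ on $B^*\setminus\overline{B}$, one also has
\[
  c\,\mu(B^*)\,|u_{B^*}| \leq \int_{B^*\setminus\overline{B}} |u-u_{B^*}|\,d\mu \leq \int_{B^*} |u-u_{B^*}|\,d\mu,
\]
so $\mu(B^*)|u_{B^*}|\leq (C_P/c)\,R\int_B g_u\,d\mu$. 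The triangle inequality then gives $\int_B|u|\,d\mu=\int_{B^*}|u|\,d\mu\leq C'R\int_B g_u\,d\mu$ with $C'$ depending only on $C_P$, $C_D$, and $c$, as required.

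The main obstacle I expect is the rigorous justification of the zero-extension step, namely verifying that extending a compactly supported $u\in\No(B)$ by zero produces a function in $\No(X)$ whose upper gradient is the zero-extension of $g_u$; this is a standard but non-trivial fact relying on the compact support of $u$ inside $B$. Producing the explicit lower bound $c$ for the relative measure $\mu(B^*\setminus\overline{B})/\mu(B^*)$ requires an iterated application of doubling and is what forces the dependence of the comparison constant on the ball $B$ itself. The remaining steps---the Poincar\'e inequality, doubling, and triangle inequality---are routine once these two points are in place.
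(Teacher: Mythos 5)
Your proposal follows essentially the same strategy as the paper: the easy inequality is noted from the definitions, and the reverse is reduced to the zero-boundary Sobolev inequality $\int_B|u|\,d\mu\le C\int_B g_u\,d\mu$ for compactly supported $u\in N^{1,1}(B)$, using the nonemptiness of $X\setminus\overline{B}$. The only difference is one of detail: the paper simply cites this inequality as the ``Poincar\'e inequality for $N_0^{1,1}(B)$-functions'' from \cite[Corollary~5.48]{BjoB10}, whereas you derive it directly by zero-extension, applying the weak $(1,1)$-Poincar\'e inequality on an enlarged ball, and using the lower measure bound on $B^*\setminus\overline{B}$ to control the mean value $|u_{B^*}|$; this derivation is correct and is in fact the standard proof of the cited corollary.
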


\begin{proof}
To see this, note that clearly $\text{cap}_{1}(E,B)\le\text{Cap}_{1}(E,B)$. 
In order to prove the remaining comparison, we may assume that $\text{cap}_{1}(E,B)<\infty$. Suppose $u$ is in $N^{1,1}(B)$
with $u=1$ on $E$ and $u$ has a compact support in $B$. 
Since $X\setminus \overline{B}$ is nonempty, it contains a ball which has a positive measure according to the assumptions we made at beginning of this article. 
Thus $X\setminus B$ has a positive capacity and we can apply the
``Poincaré inequality for $N_{0}^{1,1}(B)$-functions'', see \cite[Corollary~5.48]{BjoB10},
to obtain
\[
  \int_B|u|\, d\mu 
                              \le C\int_Bg_u\, d\mu.
\]
Here the constant $C$ depends on the ball $B$ as well as the doubling constant of measure and constants in the
weak $(1,1)$-Poincar\'e inequality. 
In the above estimate we used the fact that since $u$ has a compact support in $B$ we can take $g_u=0$ on 
$X\setminus B$. 
Hence
\[
  \text{Cap}_{1}(E,B)\le \int_B(|u|+g_u)\, d\mu\le C\int_B g_u\, d\mu.
\]
Taking the infimum over all such $u$ gives
\[
  \text{Cap}_{1}(E,B)\le C\ \text{cap}_{1}(E,B).\qedhere
\] 
\end{proof}
In fact, since $\Vert Du\Vert(B)$ is defined using approximation by Lipschitz functions, 
we can use same arguments as in the above proof together with Lemma~\ref{compact} to obtain an analogous result for the corresponding BV-capacities. 
In this case we have the inequality
\[\int_B|u|\, d\mu\leq C\Vert Du\Vert(B)\]
and this yields
\[\text{Cap}_{\BV,\text{O}}(E,B)\le C\Vert Du\Vert(B).\]
Thus we have the following.
\begin{corollary} 
Let $B\subset X$ be a ball such that $X\setminus \overline{B}$ is nonempty. For every $E\subset B$ we have
\[
   \mathrm{cap}_{\BV,\mathrm{O}}(E,B)\approx \mathrm{Cap}_{\BV,\mathrm{O}}(E,B),
\] 
with the constants of comparison
depending solely on the doubling constant, the Poincar\'e constants, and the 
ball $B$. 
\end{corollary}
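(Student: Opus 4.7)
The plan is to mirror the proof of the preceding lemma, replacing the Sobolev energy by the BV total variation and using Lemma~\ref{compact} to bridge the two settings. The trivial direction $\text{cap}_{\BV,\text{O}}(E,B)\le \text{Cap}_{\BV,\text{O}}(E,B)$ is immediate, since the additional summand $\int_B|u|\,d\mu$ in the definition of $\text{Cap}_{\BV,\text{O}}$ is nonnegative. For the reverse inequality I may assume $\text{cap}_{\BV,\text{O}}(E,B)<\infty$, fix $\ep>0$, and choose an admissible test function $u\in\BV(B)$ (one with compact support in $B$ and equal to $1$ in a neighborhood of $E$) such that $\Vert Du\Vert(B)<\text{cap}_{\BV,\text{O}}(E,B)+\ep$. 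The reduction then amounts to proving the BV analogue of the $N^{1,1}_0$ Poincar\'e inequality, namely
\[
\int_B |u|\,d\mu \;\le\; C\,\Vert Du\Vert(B),
\]
with $C$ depending only on the doubling constant, the Poincar\'e constants, and the ball $B$.

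To establish this inequality I would invoke Lemma~\ref{compact} to produce a sequence $\{u_i\}\subset \mathrm{Lip}(B)$, each supported in a common set $U\Subset B$, with $u_i\to u$ in $L^1(B)$ and $\int_B g_{u_i}\,d\mu \to \Vert Du\Vert(B)$ for suitable upper gradients $g_{u_i}$. Since each $u_i$ is Lipschitz with compact support in $B$, it lies in $N^{1,1}_0(B)$. Because $X\setminus\overline{B}$ is nonempty and therefore, by the standing assumptions on the measure, contains a ball of positive capacity, the Poincar\'e inequality for $N^{1,1}_0(B)$-functions from \cite[Corollary~5.48]{BjoB10} (the same tool used in the preceding lemma) yields
\[
\int_B|u_i|\,d\mu \;\le\; C\int_B g_{u_i}\,d\mu
\]
with $C$ independent of $i$. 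Passing to the limit $i\to\infty$ on both sides (using $L^1$-convergence on the left and the defining property of $\Vert Du\Vert(B)$ on the right) gives the desired inequality for $u$ itself.

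With this in hand, the originally chosen $u$ is still admissible for computing $\text{Cap}_{\BV,\text{O}}(E,B)$, so
\[
\text{Cap}_{\BV,\text{O}}(E,B) \;\le\; \int_B|u|\,d\mu+\Vert Du\Vert(B) \;\le\; (C+1)\,\Vert Du\Vert(B) \;<\; (C+1)\bigl(\text{cap}_{\BV,\text{O}}(E,B)+\ep\bigr),
\]
and letting $\ep\to 0$ finishes the proof. The only delicate point is ensuring that the Lipschitz approximants $u_i$ genuinely have compact support in $B$, so that the $N^{1,1}_0(B)$ Poincar\'e inequality applies uniformly; but this is precisely what Lemma~\ref{compact} provides, and no serious technical obstacle is expected. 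The dependence of $C$ on $B$ is inherited verbatim from the Sobolev case, matching the statement of the corollary.
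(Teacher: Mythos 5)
Your proposal is correct and follows essentially the same route as the paper, which likewise reduces the comparison to the inequality $\int_B|u|\,d\mu\le C\,\Vert Du\Vert(B)$ obtained by invoking Lemma~\ref{compact} to approximate the admissible $u\in\BV(B)$ by compactly supported Lipschitz functions and then applying the Poincar\'e inequality for $N^{1,1}_0(B)$ from \cite[Corollary~5.48]{BjoB10} and passing to the limit. The argument, including the trivial direction and the letting-$\ep\to0$ step at the end, matches the paper's (briefly sketched) proof.
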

Furthermore, if $K\subset B$ is compact, then according to Theorem \ref{BV,OvsCap} in the previous section
$\text{cap}_{\BV,\text{O}}(K,B)=\text{cap}_{1,\text{O}}(K,B)=\text{cap}_{1}(K,B)$. 

For more general sets $E$,  we consider
\begin{align*}
    \text{Cap}_1(E)&=\inf\int_X(|u|+g_u)\, d\mu,\\
    \text{Cap}_{\BV,\mathrm{O}}(E)&=\inf\left(\int_X u\,d\mu+\Vert Du\Vert(X)\right)
\end{align*}
where the first infimum is taken over
all $u\in N^{1,1}(X)$ such that $u=1$ on $E$ and the second infimum is taken over all $u\in \BV(X)$ such that $u=1$ on an open neighborhood $E$. 
It was shown in~\cite{KinKST08} that the set of non-Lebesgue points of a function in $N^{1,1}(X)$ is of zero $\text{Cap}_1$--capacity.

It can be seen that $\text{Cap}_1(E)=0$ if and only if for all balls $B$ in $X$, 
$\text{Cap}_{1}(E\cap B,2B)=0$. Indeed, let us assume that the latter condition holds. By comparing the test functions we immediately have that 
\[\text{Cap}_{1}(E\cap B)\le \text{Cap}_{1}(E\cap B,2B)\]
for all balls $B\subset X$.
Thus, if a ball $B\subset X$ is fixed, then the countable subadditivity of $1$-capacity, see for instance \cite{BjoB10}, implies that
\[\text{Cap}_{1}(E)\leq\sum\limits_{k=1}^{\infty}\text{Cap}_{1}(E\cap kB)\leq\sum\limits_{k=1}^{\infty}\text{Cap}_{1}(E\cap kB,2kB)=0.\]
In order to prove the opposite inequality, let us assume that $\text{Cap}_1(E)=0$. Let $B\subset X$ be a ball and let $\ep>0$. There is a function
$u\in N^{1,1}(X)$ such that $u=1$ on $E$ and that
\[\int_X(|u|+g_u)\, d\mu<\ep.\]
Let $v=u\eta$ where $\eta\in\text{Lip}(2B)$ is such that $0\leq\eta\leq 1$, $\eta=1$ in $B$ and that $\text{supt}(\eta)$ is a compact subset of $2B$. Denote by $L>0$ the Lipschitz constant of $\eta$. Then $\eta g_{u}+L|u|$ is a weak upper gradient of $v$ and
\begin{align*}\text{Cap}_{1}(E\cap B,2B)&\leq\int_{2B}\left(|v|+\eta g_{u}+L|u|\right)\, d\mu\\
&\leq(L+1)\int_{X}(|u|+g_u)\, d\mu\\
&<(L+1)\ep.
\end{align*}
The desired result then follows by letting $\ep\to 0$.

The previous argument can be used to obtain the corresponding result for BV-capacities. Indeed the BV-capacity $\text{Cap}_{\BV,\mathrm{O}}(\cdot)$ is countable subadditive, see~\cite[Theorem~3.3]{HakKin}, 
and thus $\text{Cap}_{\BV,\mathrm{O}}(E\cap B,2B)=0$ for all balls $B$ in $X$ implies similarly that $\text{Cap}_{\BV,\mathrm{O}}(E)=0$. On the other hand, if $\text{Cap}_{\BV,\mathrm{O}}(E)=0$, then
the equivalence of capacities, see~\cite[Theorem~4.3]{HakKin}, implies that $\text{Cap}_1(E)=0$. Thus the previous statement concerning $1$-capacities together with the fact that $N^{1,1}(2B)\subset \BV(2B)$ gives
\[\text{Cap}_{\BV,\mathrm{O}}(E\cap B,2B)\leq\text{Cap}_{1}(E\cap B,2B)=0\]
for all balls $B\subset X$.

Let us recall that the Theorems 4.3 and 5.1 in~\cite{HakKin} imply that for any set $E\subset X$
\[
\text{Cap}_{\text{BV,O}}(E)=0 \Longleftrightarrow\text{Cap}_{1}(E)=0\Longleftrightarrow\mathcal{H}(E)=0.
\]
The following corollary is a consequence of the results studied above in this section. 
In this result we assume that $X\setminus\overline{2B}$ is nonempty. However, 
if $X\setminus\overline{2B}$ is empty, then $\overline{2B}=X$ and the following versions
of total $1$-capacities coincide, as do the total BV-capacities; however, the variational capacities are all zero.

\begin{corollary}
Let $B\subset X$ be a ball such that $X\setminus \overline{2B}$ is nonempty and let $E\subset B$. Then
the following are equivalent.
\begin{enumerate}
\item $\mathrm{Cap}_1(E)=0$,
\item $\mathrm{Cap}_{\BV,\mathrm{O}}(E)=0$,
\item $\mathrm{Cap}_{1}(E,2B)=0$,
\item $\mathrm{Cap}_{\BV,\mathrm{O}}(E,2B)=0$,
\item $\mathrm{cap}_{1}(E,2B)=0$,
\item $\mathrm{cap}_{\BV,\mathrm{O}}(E,2B)=0$,
\item $\mathcal{H}(E)=0$.
\end{enumerate}
\end{corollary}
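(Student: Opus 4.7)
The plan is to organize the seven conditions into blocks already linked by results proved earlier in the paper and then add two short bridges. The triangle $(1) \Leftrightarrow (2) \Leftrightarrow (7)$ is given by Theorems~4.3 and 5.1 of \cite{HakKin}, as recalled immediately before the statement. The equivalences $(3) \Leftrightarrow (5)$ and $(4) \Leftrightarrow (6)$ follow from the Lemma and Corollary of this section, applied to the ball $2B$: the standing hypothesis $X \setminus \overline{2B} \ne \emptyset$ in the statement is exactly what those two comparison results require, so they yield $\mathrm{cap}_{1}(E, 2B) \approx \mathrm{Cap}_{1}(E, 2B)$ and $\mathrm{cap}_{\BV,\mathrm{O}}(E, 2B) \approx \mathrm{Cap}_{\BV,\mathrm{O}}(E, 2B)$.

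What remains is to bridge the global capacities to the local ones, that is, to establish $(1) \Leftrightarrow (3)$ and $(2) \Leftrightarrow (4)$. For the implications $(1) \Rightarrow (3)$ and $(2) \Rightarrow (4)$ I would invoke the observation established in the discussion preceding the corollary: $\mathrm{Cap}_1(E)=0$ if and only if $\mathrm{Cap}_1(E \cap B', 2B')=0$ for every ball $B' \subset X$, and the analogous equivalence holds for $\mathrm{Cap}_{\BV,\mathrm{O}}$ via its countable subadditivity \cite[Theorem~3.3]{HakKin}. Specializing to $B' = B$ and using $E \cap B = E$ (since $E \subset B$) yields $(3)$ from $(1)$ and $(4)$ from $(2)$.

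For the reverse implications $(3) \Rightarrow (1)$ and $(4) \Rightarrow (2)$ my plan is a direct zero-extension argument. Any admissible test function for $\mathrm{Cap}_1(E, 2B)$ has compact support inside $2B$, so extending it by zero to all of $X$ produces an element of $N^{1,1}(X)$ with the same $N^{1,1}$-norm that still equals $1$ on $E$; this makes it admissible for $\mathrm{Cap}_1(E)$. The same device works in the BV category: compact support in $2B$ guarantees that no perimeter concentrates on $\partial(2B)$, so the zero-extension lies in $\BV(X)$ with $\Vert Du\Vert(X) = \Vert Du\Vert(2B)$ and still equals $1$ on the original open neighborhood of $E$. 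I do not foresee a serious obstacle; the only point needing verification is that the zero-extension preserves the admissibility class for each of the global capacities, after which the established equivalences knit together into the cycle $(7) \Leftrightarrow (1) \Leftrightarrow (3) \Leftrightarrow (5)$ and $(1) \Leftrightarrow (2) \Leftrightarrow (4) \Leftrightarrow (6)$, completing the proof.
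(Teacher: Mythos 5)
Your proposal is correct and organizes the argument exactly as the paper intends, since the paper itself only says the corollary ``is a consequence of the results studied above in this section'' without spelling out the implications. The decomposition into $(1)\Leftrightarrow(2)\Leftrightarrow(7)$ via \cite{HakKin}, $(3)\Leftrightarrow(5)$ and $(4)\Leftrightarrow(6)$ via the preceding Lemma and Corollary for the ball $2B$, and the bridges $(1)\Leftrightarrow(3)$, $(2)\Leftrightarrow(4)$ using the localization discussion and the zero-extension inequalities $\mathrm{Cap}_1(E)\le\mathrm{Cap}_1(E,2B)$ and $\mathrm{Cap}_{\BV,\mathrm{O}}(E)\le\mathrm{Cap}_{\BV,\mathrm{O}}(E,2B)$ (the BV case justified by Lemma~\ref{compact}) is precisely the chain the surrounding text sets up.
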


In potential theory one is usually concerned with whether a set is of zero capacity or not.
The above corollary shows that any of the six forms of capacity can be used in studying this question.
Furthermore, if $\text{Cap}_{\BV,\mathrm{O}}(E,B)=0$ then $\mathcal{H}(E)=0$.

The above type total capacities and their connections to Hausdorff measure of codimension one were studied in~\cite{HakKin}. The relative
variational BV-capacity in metric measure space setting was studied in~\cite{KinKST10} in the context of De Giorgi measure
and an obstacle problem. In~\cite{KinKST10}, the authors require the capacity test functions for $\text{cap}_{\BV,\text{O}}(E,B)$ to be 
zero in $X\setminus\overline{B}$. As an interesting result~\cite[Corollary~6.4]{KinKST10} they obtain that if $\text{cap}_{\BV,\text{O}}(E,B)>0$, then
\[\mathcal{H}(\partial^{*} G)\approx \text{cap}_{\BV,\text{O}}(E,B),\]
for some $G\in\mathcal{G}$ satisfying $E\subset\text{int}\,G$. The class $\mathcal{G}$, related to De Giorgi measure, is a refined collection of measurable sets
satisfying certain density conditions, see~\cite{KinKST10} for details.




\noindent Address:\\

\noindent H.H:  Department of Mathematical Sciences, 
P.O. Box 3000, 
FI-90014 University of Oulu, Finland. \\
\noindent E-mail: {\tt heikki.hakkarainen@oulu.fi}\\

\noindent N.S.: Department of Mathematical Sciences, P.O.Box 210025, University of
Cincinnati, Cincinnati, OH 45221--0025, U.S.A. \\
\noindent E-mail: {\tt nages@math.uc.edu} \\
\end{document}